\author{Antti Veilahti}
\title{ $\ $\\ $\ $\\ $\ $\\Alain Badiou's Mistake---\\ Two Postulates of Dialectic Materialism \footnote{2010 \textit{M\MakeLowercase{athematics} S\MakeLowercase{ubject} C\MakeLowercase{lassification}} 03G10, 03G30, 18B25, 18C10, 18F05. T\MakeLowercase{his paper is a revision to an original paper published in} 2013.}}
\begin{document}

\begin{abstract}
I discuss how Alain Badiou's \LW\ attempts to rephrase his material dialectic philosophical project in terms of topos theory. It turns out that his account restricts to the so called local topos theory. In particular, his claim that categorical change is not genuine is based on a constrained understanding of topos theory. We then discuss his own 'postulate of materialism' and demonstrate that it has two different interpretations depending on whether it is articulated in local or elementary topos theory. While the main concerns in this paper are technical, we also address the serious consequences of topos theory that weigh Badiou's philosophical project.  

\end{abstract}


\pagestyle{empty}
\begin{titlepage}
\pagestyle{empty}
\end{titlepage}
\maketitle	
\thispagestyle{empty}

\tableofcontents



\pagestyle{plain}
\pagenumbering{arabic}

\section*{Introduction}
\setcounter{page}{1}

Alain Badiou, perhaps the most prominent French philosopher today, is one of the key figures in \textit{dialectic materialism}---a discourse that has overshadowed the past century of continental philosophy after the influence of radical social critique. But it is an interesting discourse to contrast also  with contemporary scientific materialism. Badiou's philosophy, which touches several key themes in contemporary mathematics, can be interpreted as an endeavour in this direction. 

Topos theory then plays a pivotal role to his argument. This is not so much because of the exact way in which topos theory came to geometrize and express categorically a new foundation to formal logic and set theory. Instead, while arguing that mathematics 'is ontology'\endnote{Badiou, Alain (2006), \textit{Being and Event.} Transl. O. Feltman. London, New York: Continuum. [Originally published in 1988.]}, he discovers Paul Cohen's work in the incompleteness of set theory---a precursor and a prelude to the categorical structural approach to the 'topos' of mathematics. But still relying on set theory, Cohen's work is the only discourse of the 'subject' of mathematics that Badiou does respect. 

This results with an interesting ambivalence. First, even if implicitly, the most elementary structures of topos theory are already immersed in Paul Cohen's\endnote{Cohen, Paul J. (1963), 'The Independence of the Continuum Hypothesis', \textit{Proc. Natl. Acad. Sci. USA} 50(6). pp. 1143--1148. 

Cohen, Paul J. (1964), 'The Independence of the Continuum Hypothesis II', \textit{Proc. Natl. Acad. Sci. USA} 51(1). pp. 105--110.
}  argument. Therefore, we can say that a 'topos' plays a pivotal role to the first one of Badiou's two great treatises: \BE. Lawere and Tierney then introduced topos theory in the 1970's by making those structures, which are hiddenly immersed in Badiou's first argument, explicit. This resulted with something that goes beyond Cohen's original field of inquiry: an entirely new way to view mathematics. 

By contrast, Badiou's\endnote{Badiou, Alain (2009). \textit{Logics of Worlds. Being and Event, 2.} Transl. Alberto Toscano. London and New York: Continuum. [Originally published in 2006.]} latter work \LW\ can be viewed as an attempt to support his own, still set-theoretic stand point over mathematical 'being'. Therefore, the precise beauty of his first argument---Cohen's work as a prelude to change---is condemned by the second treatise. He must have discovered topos theory because of this genealogical connection that then inspired him to write the second treatise in the first place. Even so, and because of this connection, he only discovers a literature that restricts to local theory: something reconcilable from the point of view of set theory. Choosing to delimit his discussion to this limited appearance is not as such a failure, of course: Cohen's argument can indeed be expressed as a local topos. However, this in no way undermines the precursors to change that were already immersed in Cohen's intuition. Badiou, by contrast, seems obliged to make a case against and to denounce the relevance of categorical techniques which could make that prelude articulate. Therefore, he says, the 'same mathematics' still continues; that Grothendieck's geometric insight  incorrigibly fails as a genuine event; that there is no change in the 'way of doing mathematics'\endnote{Badiou, 2009: 540.}. 

This reductive, 'ontological' decision is constitutive from the beginning of his second argument, and any conclusions he draws on the materiality of phenomenology are to be read with caution. The basic line of thought comes down to \textit{affirming the consequenct}, that is, by showing that some categories take a particular form while then suggesting that all of them should do so. 

In contrast, as we demonstrate, should we follow topos theory proper there are two materially (mathematically) distinct conditions that only locally---i.e. from the point of view of  \textit{local} topos theory, appear to reflect the same 'postulate of materialism' which is pivotal to Badiou's phenomenological argument. We can say that a local topos is \textit{materially local} in relation to set theory, whereas a more general one (the so called Grothendieck-topos) satisfies conditions that make it \textit{material} over $\sets$ but not in the constrained, 'local' way. Faithful to Badiou's own vocabulary, we call the former, stronger condition an \textit{atomic} version of the postulate of materialism, whereas a Grothendieck-topos satisfies only a \textit{weak} version of the postulate. The weak version is not logically bounded or constitutive: this suggests that 'Grothendieck' does make a material difference

Of course, Alexander Grothendieck himself has little to do with the specific definition of a Grothendieck-topos, even if his insight stands behind them at least in a simplistic way. Namely, such material, \textit{elementary} topoi form the first but not the last step in generalizing mathematics beyond set theory. They are particularly important, however, because they already articulate change in the way of doing mathematics in a way that can be materially circumscribed from the point of view of set thery. Grothendieck-topoi thus interestingly contrast with Badiou's own, ahistorical and 'pure' situation of mathematical ontology. 

To understand the difference between the two doctrines of materialism, we need a categorical point of view---an insight absent in Badiou's own work. Category theory, even if debatably, can be considered as an alternative foundation to mathematics. In particular, if set theory is based on the formal dialect of logic and, in that respect, can be considered as a 'dialectical' one, category theory in contrast is based on \textit{structural arguments} and applies diagrams that have more geometric meaning. In the context of post-structuralism---following Deleuze and Foucault---it is intriguing to treat the categorical approach as a 'diagrammatic' one as opposed to the 'dialectical' reason behind set theory. While topos theory then combines the two foundations by treating 'dialectics' itself 'diagrammatically', the focus shifts \textit{from syntax to semantics}: to the question of the \textit{meaning} of that what Badiou treats as the self-evident discourse of 'being'. In particular, it teaches us that the problem of  \textit{materiality itself is not inevitably a logically articulable question}. 

As a brief historical overview, category theory was first introduced by Samuel Eilenberg and Saunders Mac Lane during the 1940's. In result, various fields of mathematics including algebraic geometry and geometric representation theory were revolutionarized. Philosophers  have paid little attention to this shift\endnote{See Krömer, Ralf (2007), \textit{Tool and Object: A History and Philosophy of Category Theory}. Science Networks. Historical Studies 32. Berlin: Birkhäuser.}, however, and in particular in the \textit{structuralist} shift in the focus of mathematics\endnote{See for example

Awodey, S. (1996), 'Structure in Mathematics and Logic: A Categorical Perspective'. \textit{Philosophia Mathematica} 4 (3). pp. 209--237. doi: 10.1093/philmat/4.3.209.

Palmgren, E. (2009), 'Category theory and structuralism'. url: www2.math.uu.se/\~\ palmgren/CTS-fulltext.pdf, accessed Jan 1$\st$, 2013.

Shapiro, S. (1996), 'Mathematical structuralism'. \textit{Philosophia Mathematica} 4(2), pp. 81--82.

Shapiro, S. (2005) 'Categories, structures, and the Frege-Hilbert controversy: The status of meta-mathematics'. \textit{Philosophia Mathematica} 13(1), pp. 61--62.
}. 

Rather than standing itself as its new foundation, topos theory stands at the cross-road of the two, dialectic and categorical paradigms through which it intermediates the effects of change. To borrow Badiou's own, event-philosophical vocabulary, if category theory counts as an event, topos theory deals with the local consequences that are then visible also to set theory. It is not all clear,  however, whether the categorical event was the cause behind this localization or whether it was actually in set theory that something interesting did happen (as demonstrated by Cohen). From the point of view of his own argument, it would be meaningless to ask whether one of the two perspectives, 'top-down' and 'bottom-up' came first. As Badiou\endnote{Badiou, \LW, 2009. p. 334.} himself knows, the two always occur in tandem. 

To introduce this event from below---on the the local side of set theory---Cohen did shows that there are sentences that are necessarily undecidable: there are contrary, transitive denumerable models of set-theory. We may choose one which verifies a certain statement while another one disproves it. Lacking categorical insight, however, neither Cohen nor Badiou could figure out how to think of  such contradicting models or situations \textit{together}. Badiou's philosophy could only deal with the occurrence of the 'inconsistent' as a 'generic' decision, that is, as a choice of only one among all situations $S(\female)$. He thus believes that \textit{one} needs to choose which context to inhabit instead of residing in and between many of them \textit{all} at once. Topos theory, by contrast, does use categorical techniques to specifically express the amalgam of such situations so that the need to decide does not arise but possibly afterwards. Badiou's decision between situations then only emerges as a (local) projection of that topos onto set theory. Such projections (indicated by $\female$) are in fact specific kind of geometric morphisms that topos theorists refer to as a 'points'. It is not by accident that a 'decision' in Badiou's vocabulary then stands for that precise procedure through which category theorists 'make a point'. There is much synergy between dialectical and scientific modes of materialisms.

My question then concerns how precisely to correct the way in which Badiou treats his point---the 'postulate of materialism'---and how to express it in a categorically adequate way. Ironically, Badiou\endnote{Badiou, \textit{Logics of Worlds}, 2009. p. 197.} himself claims that '[i]f one is willing to bolster one's confidence in the mathematics of objectivity, it is possible to take even further the thinking of the logico-ontological, of the chiasmus between the mathematics of being and the logic of appearing'. He further quotes Jean Dieudonné by saying that to earn the 'right' to speak 'one must master the active, modern mathematical corpus'\endnote{Badiou 2005, 12.}. Failure to do so leads to several philosophical issues that we also briefly discuss in the end of this paper. Sections 1--5 focus on Badiou's own formalism. Sections 6--11 shift to categorical setting and express a correct version of several Badiou's statements. We mainly follow the works by Peter Johnstone\endnote{Johnstone, Peter T. (1977), \textit{Topos Theory}. London: Academic Press. 

Johnstone, Peter T. (2002). \textit{Sketches of an Elephant. A Topos Theory Compendium.} Volume 1. Oxford: Clarendon Press. 
} and Saunders MacLane and Ieke Moerdijk\endnote{
Mac Lane, Saunders \& Ieke Moerdijk (1992), \textit{Sheaves in Geometry and Logic. A First Introduction to Topos Theory}. New York: Springer-Verlag.}.

\section{From Ontology to Phenomenology}

Since at least Kant's\endnote{Kant, Immanuel (1855), \textit{Critique of Pure Reason}. Trans. J. M. D. Meiklejohn. London: Henry G. Bohn. 
} \textit{Pure Reason}, the question of causation has been pivotal to theoretical philosophy. What are the \textit{a priori} rules of deduction that can be assumed without the use of senses? As if requiring no historical insight, Badiou\endnote{Badiou, \textit{Being and Event,} 2005, 4. } assumes that the question of the 'pure' then best correlates with formal logic and set theory: that 'mathematics \textit{is} ontology---the science
of being qua being'---even despite the dialectic 'impasses of logic' (eg. Gödel, Tarski and Cohen).

When it comes to ontology, Badiou claims, an event can only be localized against this language; there is no other way to for it to deploy its consequences but through those impasses which could not be communicated without the use of set-theoretic language. In this sense, even if the event is about what mathematics is not, his reductive understanding of mathematics becomes crucial to understanding precisely that what that 'what is not' \textit{is}. Expressed in formal language, Badiou then deploys the event as a self-belonging, auto-affirmative multiple $e \in e$. It is the crux of his argument that the very existence of such an 'event-multiple' obstructs the axiom of foundation\endnote{To demonstrate, if $e \in e$, then $y \in \{e\}$ implies $y = e$ so that one necessarily has $e \in y $ and $e \in \cap \{e\}$ which implies $e \in y \cap \{e\}$. This contradicts the axiom of foundation.}. 

\begin{axiom}[Foundation] For each non-empty set $x$ there is an element $y \in x$ so that their intersection $x \cap y = \emp$ is empty. \end{axiom}

The connection between the event and Cohen's argument, however, is rather 'metaphorical' and there is no mathematically material way to combine the two insights that occupy Badiou in the beginning and the end of the \BE, that is, the association between the event-multiple $e$ and the 'generic' decision or set $\female$, which stands out as something 'supernumerary' to a particular transitive, denumerable model of set theory $S$. By contrast, the $\LW$ begins from a different set of problematics: he seeks to treat the question of 'phenomenology' instead. One might wonder if the latter work then connects the two approaches to the 'inconsistent', real being. Unfortunately, Badiou\endnote{Badiou, \LW, 2009: 39} confesses that this is not the case. 

But what precisely \textit{happened} when Cohen used his insight to introduce the contradicting situations whose synthesis could not be but 'inconsistent' in the sense of classical set theory? We answer this question from the point of view of categorical techniques. They form another way to localize the consequences of the event. This coheres with the more topological insight according to which what is crucial to an event is not the way it is regulated in itself but as it happens to itself instead: the event 'makes the difference: not in space and time, but to space and time'\endnote{Fraser, Mariam, Kember, Sarah \& Lury, Celia (2005), 'Inventive Life. Approaches to the New Vitalism'. \textit{Theory, Culture \& Society} 22(1): 1--14.}. Space and time are Kant's \textit{a priori} categories for the 'trans-phenomenal real'. And if there is an event, it should happen to them.

 This insight obviously lacks in Badiou's work, not least because the absence of categorical approach to his 'transitory ontology' as pointed out by Norman Madarasz\endnote{Madarasz, Norman (2005), 'On Alain Badiou's Treatment of Category Theory in View of a Transitory Ontology', In Gabriel Riera (ed), \textit{Alain Badiou---Philosophy and its Conditions}. New York: University of New York Press. pp. 23--44.}. The only topoi Badiou then deals with are so called \textit{local topoi}: they are local in respect to set-theory and, in that restricted domain, Badiou is right in that phenomenology could only arise as a 'calculated phenomenology'. 

\section{Category Theory---A New Adventure?}

The lack of categorical insight, however, does not mean the lack of categorical references. The categorical revolution is immersed in Badiou's own approach even if in a dead, unspeaking way. Indeed, by arguing that the formalism of the $\LW$ 'is very different from the one found in the $\BE$ as it shifts from '\textit{onto}-logy' to 'onto-\textit{logy}' Badiou refers to category theory\endnote{Badiou, \textit{Logics of Worlds}, 2009. p. 39.}. It deals with a different formalism, he argues: an alternative approach to the 'trans-phenomenal real'\endnote{Ibid. p. 104.}. Yet, he argues, in this precise sense category theory makes no difference; it has no implications to space and time as concepts. 

If this only were the case. Instead, Badiou himself refuses to make a Kantian shift to understand a mathematical object in the categorical way: not in relation to what it consists of but in the way in which it relates to other objects instead. He cannot comply with what he himself regards as Kant's 'sanctimonious declaration that we can have no knowledge of this or that'\endnote{Ibid. p. 535.} as such an inquiry would be 'always threatening you with
detention, the authorization to platonize'\endnote{Ibid. p. 536.}. In the sense that categorical objects have no contents, they are all empty. Yet Badiou fails to grasp the contents of incorporeality as it resides even between such empty objects, that is, the statement '$x \in A$'  as an incorporeal \textit{relationship}. Categorical topos theory, by contrast, treats it precisely as such: as an arrow between objects instead of assuming $x$ and $A$ to exist as corporeal entities like Badiou does. 

By contrast, to define\endnote{Furthermore, there is the associative operator $\Hom(A,B) \times \Hom(B, C) \to \Hom(A, C)$ that connects any suitable pair of arrows.} a category $\ce$ it consists of a collection of objects $\Ob(\ce)$ and a class (possibly not a set in the ZFC axiomatics) morphisms or arrows $\Hom(A, B)$ for any two objects $A, B \in \Ob(\ce)$. In a topos, the predicate '$\in$' is viewed precisely as such an arrow. Therefore, objects are contained by a category but \textit{objects themselves own no elements}. They have other ways to individuate: the class of relationships specified by $\Hom$. 

A category, not an object, is thus the corporeal entity for mathematics to incorporate. Much of Badiou's argument by contrast focuses still on what objects, not their category, do incorporate. But in the 'modern mathematical corpus'\endnote{Badiou, \BE, 2005. p. 12}, which Badiou is unable to master, categorical thinking then shifts \textit{from functionality to functoriality}: a (covariant) functor between categories $F: \ce_1 \to \ce_2$ is a suitable set of maps $F: \Ob(\ce_1) \to \Ob(\ce_2)$ and $F: \Hom(A, B) \to \Hom(F(A), F(B))$. Not only does it transform objects to others but also the relationships between them in a 'diagrammatically' compatible way. It is another shortcoming in Badiou's work that he cannot distinguish between 'functions' and 'functors' and, ultimately, understand the role of diagrammatic argumentation. 

Against this background, concepts like topology and sets can only be introduced afterwards, not as \textit{a priori} entities but as they instead individuate from the class of objects and relationships---and functors between them. This means to say that they will be defined \textit{functorially}. For example, a \textit{point} in topos theory is a pair of functors $\sets \to \Es$. These two are categories of specific kind which are called elementary topoi. By contrast, Badiou himself treats topoi only as if they were themselves sets (as $\Ob(\Es)$ instead of $\Es$). The functorial idea that not only objects but also relationships do transform in a functorially meaningful way is ignored. 

These are the basic hypothesis of my paper. Let us now demonstrate them in a mathematically adequate way.

\section{Badiou's Ontological Reduction}
The technical part of the argument now begins by introducing the local formalism that constitutes the basis of Badiou's own, 'calculated phenomenology'. 
Badiou is unwilling to give up his thesis that the history of thinking of being (ontology) is the history of mathematics and, as he reads it, that of \textit{set theory}. It is then no accident that set theory is the regulatory framework under which topos theory is being expressed. He does not refer to topoi explicitly but rather to the so called complete Heyting algebras which are their procedural equivalents. However, he fails to mention that there are both 'internal' and 'external' Heyting algebras, the latter group of which refers to local topos theory, while it appears that he only discusses the latter---a reduction that guarantees that indeed that the categorical insight may give nothing new. 

Indeed, the \textit{external} complete Heyting algebras $T$ then form a category of the so called $T$-sets\endnote{In mathematical logic these structures are usually called $\Omega$-sets, but we try to avoid a confusion between the internal, categorical object $\Omega$ and the extensive grading of a locale resulting from a push-forward $T = \gamma_*(\Omega)$ in a bounded morphism $\gamma: \ce \to \sets$.}, which are the basic objects in the 'world' of the \LW. They local topoi or the so called 'locales' that are also 'sets' in the traditional sense of set theory.  This 'constitution' of his worlds thus relies only upon \textit{Badiou's own decision} to work on this particular regime of objects, even if that regime then becomes pivotal to his argument which seeks to denounce the relevance of category theory. 

This problematic is  particularly visible in the designation of the world $\m$ (mathematically a topos) as a 'complete' (presentative) situation of being of '\textit{universe} [which is] the (empty) concept of a being of the Whole'\endnote{Badiou, \textit{Logics of Worlds}, 2009. pp. 102, 153--155.}. He recognises the 'impostrous' nature of such a 'whole' in terms of Russell's paradox, but in actual mathematical practice the 'whole' $\m$ becomes to signify the category of $\sets$\endnote{See p. \pageref{setsworld}, ft. \ref{setsworld}.}---or any similar topos that localizable in terms of set theory. The vocabulary is somewhat confusing, however, because sometimes $T$ is called the 'transcendental of the world', as if $\m$ were defined only as a particular locale, while elsewhere $\m$ refers to the category of all locales ($\Loc$). 

\begin{dfn}
An external Heyting algebra is a set $T$ with a partial order relation $<$, a minimal element $\mu \in T$, a maximal element $M \in T$. It further has a 'conjunction' operator $\wedge: T \times T \to T$ so that $p \wedge q \leq p$ and $p \wedge q = q \wedge p$.
Furthermore, there is a \textit{proposition} entailing the equivalence $p \leq q$ if and only if $p \wedge q = p$. Furthermore $p \wedge M = p$ and $\mu \wedge p = \mu$ for any $p \in T$. 
\end{dfn}

\begin{rmk}
In the 'diagrammatic' language that pertains to categorical topoi, by contrast, the minimal and maximal elements of the lattice $\Omega$ can only be presented as diagrams, not as sets. The internal order relation $\leq_{\Omega}$ can then be defined as the so called \textit{equaliser} of the conjunction $\wedge$ and projection-map
$$\xymatrix{\leq_{\Omega} \ar[r]^{e} & \Omega \times \Omega \ar[r]^{\wedge} \ar[r]_{\pi_1} & L.}$$ 
The symmetry can be expressed diagrammatically by saying that 
$$\xymatrix{\leq_{\Omega} \cap \geq_{\Omega} \ar@{^(->}[r] \ar@{_(->}[d] \ar@{_(->}[dr]^{\Delta \circ \iota} & \leq_{\Omega} \ar@{^(->}[d]^e \\ 
\geq_{\Omega} \ar@{^(->}[r] & \Omega \times \Omega}$$
is a pull-back and commutes. 
The minimal and maximal elements, in categorical language, refer to the elements evoked by the so called \textit{initial} and \textit{terminal} objects 0 and 1.

In the case of \textit{local Grothendieck-topoi}---Grothendieck-topoi that support generators---the external Heyting algebra $T$ emerges as a push-forward of the internal algebra $\Omega$, the logic of the external algebra $T := \gamma_*(\Omega)$ is an analogous push-forward of the internal logic of $\Omega$ but this is not the case in general. 
\end{rmk}

What Badiou further requires of this 'transcendental algebra' $T$ is that it is \textit{complete} as a Heyting algebra. 

\begin{dfn}
A \textit{complete} external Heyting algebra $T$ is an external Heyting algebra together with a function $\Sigma: \Power T \to T$ (the least upper boundary) which is distributive with respect to $\wedge$. Formally this means that $\Sigma A \wedge b = \Sigma \{a \wedge b \mid a \in A \}$.
\end{dfn}

\begin{rmk}
In terms of the subobject classifier $\Omega$, the envelope can be defined as the map $\Omega^t: \Omega^\Omega \to \Omega^1 \cong \Omega$, which is internally left adjoint to the map $\downarrow seg: \Omega \to \Omega^\Omega$ that takes $p \in \Omega$ to the characteristic map of $\downarrow(p) = \{q \in \Omega \mid q \leq p\}$\endnote{Johnstone, \textit{Topos Theory}, 1977, pp. 147--148.}. 
\end{rmk}

The importance the external complete Heyting algebra plays in the intuitionist logic relates to the fact that one may now define precisely such an intuitionist logic on the basis of the operations defined above. 

\begin{dfn}[Deduction]
The dependence relation $\Rightarrow$ is an operator satisfying $$p \Rightarrow q = \Sigma \{t \mid p \cap t \leq q\}.$$\end{dfn}

\begin{dfn}[Negation] 
A negation $\neg: T \to T$ is a function so that 
$$ \neg p = \Sigma \{q \mid p \cap q = \mu\},$$ and it then satisfies $p \wedge \neg p = \mu$. 
\end{dfn}

Unlike in what Badiou\endnote{Badiou, \textit{Logics of Worlds}, 2009. pp. 183--188.} calls a 'classical world' (usually called a Boolean topos, where $\neg \neg = 1_{\Omega}$), the negation $\neg$ does not have to be reversible in general. In the domain of local topoi, this is only the case when the so called \textit{internal axiom of choice}\endnote{Eg. Johnstone, \textit{Topos Theory}, 1977, 141.} is valid, that is, when epimorphisms split---for example in the case of set theory. However, one always has $p \leq \neg \neg p$. On the other hand, all  Grothendieck-topoi---topoi still \textit{materially} presentable over $\sets$---are possible to represent as parts of a Boolean topos.

\section{Atomic Objects}

Badiou criticises the proper form of intuition associated with multiplicities such as space and time. However, his own 'intuitions' are constrained by set theory. His intuition is therefore as 'transitory' as is the ontology in terms of which it is expressed. Following this constrained line of reasoning, however, let me now discuss how Badiou encounters the question of 'atoms' and materiality: in terms of the so called 'atomic' $T$-sets. 

If topos theory designates the subobject-classifier $\Omega$ relationally, the external, set-theoretic $T$-form reduces the classificatory question again into the incorporeal framework. There is a set-theoretical, explicit order-structure $(T, <)$ contra the more abstract relation $1 \to \Omega$ pertinent to categorical topos theory. Atoms then appear in terms of this operator $<$: the 'transcendental grading' that provides the 'unity through which all
the manifold given in an intuition is united in a concept of the object'\endnote{Badiou, \textit{Logics of Worlds}, 2009. p. 231.}. 

Formally, in terms of an external Heyting algebra this comes down to an entity $(A, \Id)$ where $A$ is a set and $\Id: A \to T$ is a function satisfying specific conditions. 

\begin{dfn}[Equaliser]
First, there is an 'equaliser' to which Badiou refers as the 'identity' $\Id: A \times A \to T$ satisfies two conditions: 
\begin{enumerate}\label{idconditions}
\item symmetry: $\Id(x,y) = \Id(y, x)$ and 
\item transitivity: $\Id(x, y) \wedge \Id(y, z) \leq \Id(x, z)$.  
\end{enumerate} 
They guarantee that the resulting 'quasi-object' is objective in the sense of being distinguished from the gaze of the 'subject': 'the differences in degree of appearance are not prescribed by the exteriority of the gaze'\endnote{Ibid., 205.}. 
\end{dfn}

This analogous 'identity'-function actually relates to the structural \textit{equalization}-procedure as appears in category theory. Identities can be structurally understood as equivalence-relations. Given two arrows $X \rightrightarrows Y$, an \textit{equaliser} (which always exists in a topos, given the existence of the subobject classifier $\Omega$) is an object $Z \to X$ such that both induced maps $Z \to Y$ are the same. Given a topos-theoretic object $X$ and $U$, pairs of elements of $X$ over $U$ can be compared or 'equivalized' by a morphism $\xymatrix{X^U \times X^U \ar[r]^{eq} & \Omega^U}$
sturcturally 'internalising' the synthetic notion of 'equality' between two $U$-elements.\endnote{Johnstone, \textit{Topos Theory}, 1977, pp. 39--40.} 
Now it is possible to  formulate the cumbersome notion of the 'atom of appearing'. 

\begin{dfn}
An \textit{atom} is a function $a: A \to T$ defined on a $T$-set $(A, \Id)$ so that
\begin{enumerate}
\item[(A1)] $a(x) \wedge \Id(x, y) \leq a(y)$ and 
\item[(A2)] $a(x) \wedge a(y) \leq \Id(x, y)$. 
\end{enumerate}
\end{dfn}
As expressed in Badiou's own vocabulary, an atom can be defined as an '\textit{object-component which, intuitively, has at most one element in the following sense: if there is an element of $A$ about which it can be said that it belongs absolutely to the component, then there is only one. This means that every other element that belongs to the component absolutely is identical, within appearing, to the first}'\endnote{Badiou, \textit{Logics of Worlds}, 2009. p. 248.}. 

These two properties in the definition of an atom is highly motivated by the theory of $T$-sets (or $\Omega$-sets in the standard terminology of topological logic). A map $A \to T$ satisfying the first inequality is usually thought as a 'subobject' of $A$, or formally a $T$-subset of $A$. The idea is that, given a $T$-subset $B \subset A$,\ we can consider the function $$\Id_B(x) := a(x) = \Sigma\{\Id(x, y) \mid y \in B\}$$ and it is easy to verify that the first condition is satisfied. In the opposite direction, for a map $a$ satisfying the first condition, the subset $$B = \{x \mid a(x) = \Ee x := \Id(x, x)\}$$ is clearly a $T$-subset of $A$. 

The second condition states that the subobject $a: A \to T$ is a \textit{singleton}. This concept stems from the topos-theoretic internalization of the singleton-function $\{\cdot\}: a \mapsto \{a\}$ which determines a particular class of $T$-subsets of $A$ that correspond to the atomic $T$-subsets. For example, in the case of an ordinary set $S$ and an element $s \in S$ the singleton $\{s\} \subset S$ is a particular, atomic type of subset of $S$. 

The question of 'elements' incorporated by an object can thus be expressed externally in Badiou's local theory but 'internally' in any elementary topos.  For the same reason, there are two ways for an element to be 'atomic': in the first sense an 'element depends solely on the pure (mathematical) thinking of the multiple', whereas the second sense relates it 'to its transcendental indexing'\endnote{Ibid., 221.}. In topos theory, the distinction is slightly more cumbersome\endnote{
In purely categorical terms, an atom is an arrow $X \to \Omega$ mapping an element $U \to X$ into $U \to X \to \Omega$, thus an element of $\Omega^X$. But arrows $X \to \Omega$, given that $\Omega$ is the sub-object classifier, correspond to sub-objects of $X$.  This localic characterization of a 'sub-object' is thus topos-theoretically justified. The 'element' corresponding with a singleton is exactly the monic arrow from the sub-object to the object. Only in Badiou's localic case the order relation $\leq$ extends to a partial order of the elements of $A$ themselves, and a closed subobject turns out to be the one that is exactly generated by it smallest upper boundary, its envelope as the proof of the existence of the transcendental functor demonstrates. In other words, an atom in Badiou's sense is an atomic subobject of the topos of $T$-sets. In the diagrammatic terms of general topos theory it agrees with the so called singleton map $\{\}: X \to \Omega^X$, which is the exponential transpose of the characteristic map $X \times X \to \Omega$ of the diagonal $X \into X \times X$. 

Expressed in terms of the so called internal Mitchell--Bénabou-language, one may translate the equality $x =_A y = \Id_A(x, y)$ into a different statement $x \in_A \{y\}$ which gives some insight into why such an 'atom' should be regarded as a singleton. Similarly, if $a$ only satisfies the first axiom, the statement $x \in A_a$ would be semantically interpreted to retain the truth-value $a(x)$. 
Atoms relate to another interesting aspect: the atomic subobjects correspond to arrows $X \to \Omega$, which allows one to interpret $\Omega^X$ as the power-object. Functorially, $\Power: \Es^{op} \to \Es$ takes the object $X$ to $\Power X = \Omega^X$ and an arrow $f: X\to Y$ maps to the exponential transpose $\Power f: \Omega^Y \to \Omega^X$ of the composite $$\xymatrix{\Omega^Y \times X \ar[r]^{1 \times f} & \Omega^Y \times Y \ar[r]^{ev} & \Omega,}$$ where $ev$ is the counit of the exponential adjunction. The power-functor is sometimes axiomatised independently, but its existence follows from the axioms of finite limits and the subobject-classifier reflecting the fact that these two aspects are separate supposition regardless of their combined positing in the set-theoretic tradition.}.

Badiou still requires a further definition in order to state the 'postulate of materialism'. 

\begin{dfn} An atom $a: A \to T$ is \textit{real} if 
 if there exists an element $x \in T$ so that $a(y) = \Id(x, y)$ for all $y \in A$. 
\end{dfn}
This definition  gives rise to the postulate inherent to Badiou's understanding of 'democratic materialism'. 

\begin{pos} [Postulate of Materialism]
In a $T$-set $(A, \Id)$, every atom of appearance is real.
\end{pos}

What the postulate designates is that there really needs to \textit{exist} $s \in A$ for every suitable subset that structurally (read categorically) \textit{appears} to serve same relations as the singleton $\{s\}$. In other words, what 'appears' materially, according to the postulate, has to 'be' in the set-theoretic, incorporeal sense of 'ontology'. Topos theoretically this formulation relates  to the so called \textit{axiom of support generators} (SG), which states that the terminal object $1$ of the underlying topos is a generator. This means that the so called global elements, elements of the form $1 \to X$, are enough to determine any particular object $X$. Thus, it is this specific condition (support generators) that is assumed by Badiou's notion of the 'unity' or 'constitution' of 'objects'. In particular this makes him cross the line---the one that Kant drew when he asked \textit{Quid juris?} or 'Haven't you crossed the limit?' as Badiou\endnote{Ibid. p. 104.} translates. 

But even without assuming the postulate itself, that is, when considering a weaker category of $T$-sets not required to fulfill the postulate of atomism, the category of quasi-$T$-sets has a functor taking any quasi-$T$-set $A$ into the corresponding quasi-$T$-set of singletons $SA$ by $x \mapsto \{x\}$, where $SA \subset \Power A$ and $\Power A$ is the quasi-$T$-set of all quasi-$T$-subsets, that is, all maps $T \to A$ satisfying the first one of the two conditions of an atom designated by Badiou. It can then be shown that, in fact, $SA$ itself is a \textit{sheaf} whose all atoms are 'real' and which then is a proper $T$-set satisfying the 'postulate of materialism'. In fact, the category of $T$-$\sets$ is equivalent to the category of $T$-sheaves $\Sh(T, J)$\endnote{
Wyler, Oswald (1991), \textit{Lecture Notes on Topoi and Quasi-Topoi}. Singapore, New Jersey, London, Hong Kong: World Scientific. 
, 263.}. 
In the language of $T$-sets, the 'postulate of materialism' thus comes down to designating an equality between $A$ and its completed set of singletons $SA$. We demonstrate this in the next section. 

The particular objects Badiou discusses can now be defined as such quasi-$T$-sets whose all atoms are real; they give rise to what Badiou phrases as the 'ontological category par excellence'\endnote{Badiou, \textit{Logics of Worlds}, 2009. p. 221.}. 

\begin{dfn}
An object in the category of $T$-$\sets$ is a pair $(A, \Id)$ satisfying the above conditions so that every atom $a: A \to T$ is real. 
\end{dfn}

Next, though not specifying this in the original text, Badiou attempts to show that such 'objects' indeed give rise to a mathematical category of $T$-$\sets$. 

\section{Badiou's 'Subtle Scholium': $T$-sets are 'Sheaves'}\label{tsetisasheaf}

By following established accounts\endnote{For example 

Bell, J. L. (1988), \textit{Toposes and Local Set Theories: An Introduction.} Oxford: Oxford University Press. 

Borceux, Francis (1994), \textit{Handbook of Categorical Algebra. Basic Theory. Vol. I.} Cambridge: Cambridge University Press. 

Goldblatt, Robert (1984), \textit{The Categorical Analysis of Logic}. Mineola: Dover.

Wyler, Oswald (1991), \textit{Lecture Notes on Topoi and Quasi-Topoi}. Singapore, New Jersey, London, Hong Kong: World Scientific.} Badiou attempts to demonstrate that $T$-sets defined over an external complete Heyting algebra give rise to a so called \textit{Grothendieck-topos}---a topos of sheaves of sets over a category. As $T$ is a set, it can be made a required category by deciding its elements to be its objects and the order relations between its elements the morphisms.
He introduces the following notation.

\begin{dfn}
The self-identity or existence in a $T$-set $(A, \Id)$ is $$\Ee x = \Id(x, x).$$
\end{dfn}

\begin{prop}
From the symmetry and transitivity of $\Id$ it follows that\endnote{Badiou, \textit{Logics of Worlds}, 2009. p. 247.} $$\Id(x, y) \leq \Ee x \wedge \Ee y.$$ 
\end{prop}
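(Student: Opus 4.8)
The plan is to establish the two inequalities $\Id(x,y) \leq \Ee x$ and $\Id(x,y) \leq \Ee y$ separately, and then to combine them using the fact that $\wedge$ computes the greatest lower bound of the order $\leq$. The whole argument rests only on the symmetry and transitivity of $\Id$ together with the lattice behaviour of $\wedge$, so it should be short.

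First, to get $\Id(x,y) \leq \Ee x = \Id(x,x)$, I would instantiate the transitivity axiom at the triple $(x,y,x)$, which gives $\Id(x,y) \wedge \Id(y,x) \leq \Id(x,x)$. Using symmetry to rewrite $\Id(y,x)$ as $\Id(x,y)$ turns the left-hand side into $\Id(x,y) \wedge \Id(x,y)$. At this point the one preliminary fact I need is that $\wedge$ is idempotent: this follows from the stated equivalence $p \leq q \iff p \wedge q = p$ together with reflexivity of the order (setting $p=q$ forces $p \wedge p = p$). Hence the left-hand side collapses to $\Id(x,y)$, and the desired bound $\Id(x,y) \leq \Ee x$ drops out.

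For $\Id(x,y) \leq \Ee y$ I would simply run the same computation with the roles of $x$ and $y$ exchanged, instantiating transitivity at $(y,x,y)$ and again invoking symmetry; equivalently, apply the first inequality to the pair $(y,x)$ and then re-symmetrise. Either route yields $\Id(y,x) \leq \Id(y,y) = \Ee y$, and hence $\Id(x,y) \leq \Ee y$. To pass from the two separate bounds to the conjoined statement $\Id(x,y) \leq \Ee x \wedge \Ee y$, I would invoke the universal property of $\wedge$ as the meet: since $\Id(x,y)$ is a common lower bound of $\Ee x$ and $\Ee y$, it lies below their infimum. Concretely this amounts to computing $\Id(x,y) \wedge (\Ee x \wedge \Ee y) = (\Id(x,y) \wedge \Ee x) \wedge \Ee y = \Id(x,y) \wedge \Ee y = \Id(x,y)$, using associativity of $\wedge$ and the two inequalities just obtained.

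I do not expect a genuine obstacle: everything is a direct consequence of idempotence, symmetry and transitivity. The only point deserving a word of care is the justification that $\wedge$ really behaves as the honest meet of $(T,\leq)$ — idempotent, associative, and computing greatest lower bounds — since these properties are implicit in the Heyting-algebra structure but are not all spelled out in the definition as stated in the text. Once that is acknowledged, the proof is essentially a two-line manipulation.
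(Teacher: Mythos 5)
Your proposal is correct, and it is the standard argument: instantiate transitivity at $(x,y,x)$ and $(y,x,y)$, use symmetry and the idempotence $p \wedge p = p$ (which, as you note, follows from the stated equivalence $p \leq q \iff p \wedge q = p$ together with reflexivity), and then combine the two bounds via the meet property of $\wedge$. The paper itself supplies no proof for this proposition — it simply defers to Badiou's text by citation — so there is no alternative route to compare against; your two-line derivation is exactly what that reference carries out.
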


\begin{thm} \label{scholium}
An object $(A, \Id)$ whose every atom is real, is a sheaf.
\end{thm}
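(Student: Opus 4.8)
The plan is to realise $(A,\Id)$ as a sheaf on the locale $T$ regarded as a site, where the objects are the elements $p \in T$, the morphisms are the instances of $p \le q$, and a family $\{p_i \le p\}$ is declared to cover $p$ exactly when $\Sigma_i p_i = p$ (the canonical coverage). To $(A,\Id)$ I would attach the presheaf $F$ with $F(p) = \{x \in A \mid \Ee x \le p\}$; the restriction of $x$ along $q \le p$ should be the element representing the atom $y \mapsto \Id(x,y)\wedge q$, whose extent $\Sigma_y(\Id(x,y)\wedge q) = \Ee x \wedge q$ lies below $q$. This restriction operation is available \emph{only because} every atom is real: the postulate of materialism is precisely what guarantees that the truncated singleton $\Id(x,-)\wedge q$ is again of the form $\Id(x',-)$ for an honest element $x' \in A$. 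It then remains to verify the two sheaf axioms, separation and gluing, against this coverage.

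The heart of the argument is the gluing step, where the construction of an atom from a compatible family is the decisive move. Suppose $\{p_i\}$ covers $p$ and $(x_i)$ is a matching family, i.e. $\Ee{x_i} = p_i$ and $\Id(x_i,x_j) = p_i \wedge p_j$ for all $i,j$. I would set $a(y) = \Sigma_i \Id(x_i, y)$ and claim $a$ is an atom on $(A,\Id)$. Condition (A1), namely $a(y)\wedge\Id(y,z)\le a(z)$, follows by distributing the meet over the supremum, which is exactly the completeness hypothesis $\Sigma A \wedge b = \Sigma\{a\wedge b\}$ on $T$, and then applying transitivity termwise. Condition (A2), namely $a(y)\wedge a(z)\le \Id(y,z)$, is the subtle one: after distributivity one must bound each cross term $\Id(x_i,y)\wedge\Id(x_j,z)$ by $\Id(y,z)$, and this succeeds only because $\Id(x_i,y)\wedge\Id(x_j,z)\le p_i\wedge p_j = \Id(x_i,x_j)$, after which two applications of symmetry and transitivity collapse the chain $y - x_i - x_j - z$ down to $\Id(y,z)$. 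Having shown $a$ is an atom, the postulate supplies an $x\in A$ with $\Id(x,-) = a$; a direct computation then gives $\Id(x,x_k) = \Sigma_i \Id(x_i,x_k) = \Sigma_i(p_i\wedge p_k) = p\wedge p_k = p_k$ and $\Ee x = \Sigma_i p_i = p$, so $x$ is the required amalgamation restricting to each $x_i$.

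For separation (uniqueness of the amalgamation) I would take a second $x'$ with $\Ee{x'} = p$ and $\Id(x',x_i) = p_i$ and compute $\Id(x,x') \ge \Sigma_i \big(\Id(x,x_i)\wedge\Id(x_i,x')\big) = \Sigma_i p_i = p$, while also $\Id(x,x')\le \Ee x \wedge \Ee{x'} = p$; hence $\Id(x,x') = \Ee x = \Ee{x'}$, so $x$ and $x'$ coincide as sections of $F$. I expect the main obstacle to be twofold. First, the verification of (A2) is where all the hypotheses conspire, the matching condition, the infinite distributive law, and transitivity, and steering the inequalities so that the chain closes is the one genuinely delicate calculation; it is also the place that silently breaks if $T$ is merely a Heyting algebra rather than a \emph{complete} one, which is why the completeness requirement on $T$ is indispensable. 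Second, reading off literal equality of sections from $\Id(x,x') = \Ee x = \Ee{x'}$ presupposes that the $T$-set is treated extensionally, equivalently that $F$ is defined on $\Id$-equivalence classes of elements; I would make this identification explicit, since it is exactly the passage from the ambient $T$-set to the separated object $SA$ of singletons, and it is the reality of atoms, the postulate of materialism, that forces $A$ and $SA$ to agree.
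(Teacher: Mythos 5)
Your proof is correct and takes essentially the same route as the paper: the same site (elements of $T$ ordered by $\leq$, with $\{p_i\}$ covering $p$ exactly when $\Sigma_i p_i = p$), the same extent-graded presheaf with restrictions supplied by the postulate-realized localisations, and the same decisive step of realizing the atom $a(y) = \Sigma_i \Id(x_i,y)$ built from a pairwise-compatible (matching) family, with (A1) and (A2) verified via the completeness/distributivity of $T$ and transitivity of $\Id$ exactly as in the text. If anything, your treatment of separation --- insisting that $\Id(x,x') = \Ee x = \Ee x'$ yields equality of sections only after identifying $A$ extensionally with its set of singletons $SA$ --- is more careful than the paper's proof, which merely asserts that uniqueness ``will do the trick.''
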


The demonstration that every object $(A, \Id)$ is indeed a sheaf requires the definition of three operators: compatibility, order and localisation.

\begin{dfn} [Localisation] 
For an atom $a: A \to T$, a \textit{localisation} $a \res p$ on $p \in T$ is the atom which for each $y$ establishes
$$(a \res p)(y) = a(y) \wedge p.$$ The fact that the resulting $a \res p: A \to T$ is an atom follows trivially from the fact that (the pull-back operator) $\wedge$ is compatible with the order-relation. Because of the 'postulate of materialism', this localised atom itself is represented by some element $x_p \in A$. Therefore, the localization $x \res p$ also makes sense: $\Id(x \res p, y) = \Id(x, y) \wedge p$. 
\end{dfn}

Two functions $f$ and $g$ may are \textit{compatible} if they agree for every element of their common domain $f(x) = g(x)$. But when one only defines $f$ and $g$ as morphisms on objects (say $X$ and $Y$) there needs to be a concept for determining the \textit{localisations} $f|_{X \wedge Y} = g|_{X \wedge Y}$. 

\begin{dfn}[Compatibility]
In Badiou's formalism, two atoms are \textit{compatible} if  
$$a \ddagger b \quad \iff \quad a \res \Ee b = b \res \Ee a.$$ 
\end{dfn}

\begin{prop}
I already declared that $\Id(a, b) \leq \Ee a \wedge \Ee b$;  it is an easy consequence of the compatibility condition that if $a \ddagger b$, then $\Ee a \wedge \Ee b \leq \Id(a,b)$ and thus an equality between the two. This equality can be taken as a definition of compatibility. 
\end{prop}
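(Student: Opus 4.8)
The plan is to exploit the postulate of materialism, under which the atoms $a$ and $b$ are each represented by genuine elements of $A$, so that $\Id(a,b)$ and $\Ee a = \Id(a,a)$ are meaningful and the localisations act pointwise as $(a \res p)(y) = \Id(a,y) \wedge p$. The statement then splits into two halves: the asserted inequality (whence the equality, via the preceding proposition), and the converse that validates the closing remark that this equality \emph{can be taken as a definition} of compatibility. I expect both to reduce to pure lattice manipulation in $T$ using only symmetry, transitivity, and absorption.

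For the forward direction I would unfold $a \ddagger b$ into the pointwise identity $\Id(a,y)\wedge \Ee b = \Id(b,y)\wedge \Ee a$, valid for every $y \in A$, and then evaluate it at the single well-chosen point $y = b$. The right-hand side collapses to $\Ee b \wedge \Ee a$, since $\Id(b,b) = \Ee b$, while the left-hand side becomes $\Id(a,b)\wedge \Ee b$. Absorption ($p \wedge q \leq p$) immediately gives $\Ee a \wedge \Ee b = \Id(a,b)\wedge \Ee b \leq \Id(a,b)$. Combined with the inequality $\Id(a,b) \leq \Ee a \wedge \Ee b$ already established in the preceding proposition, this yields the desired equality $\Id(a,b) = \Ee a \wedge \Ee b$; evaluating instead at $y = a$ works symmetrically.

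To justify the final sentence I would prove the converse: assuming $\Ee a \wedge \Ee b = \Id(a,b)$, recover the pointwise identity defining $a \ddagger b$. Fixing $y$, the preceding proposition gives $\Id(a,y) \leq \Ee a$, so $\Id(a,y)\wedge \Ee b \leq \Ee a \wedge \Ee b = \Id(a,b)$; together with $\Id(a,y)\wedge \Ee b \leq \Id(a,y)$ this forces $\Id(a,y)\wedge \Ee b \leq \Id(a,b)\wedge \Id(a,y)$, and transitivity applied through the symmetry $\Id(a,b) = \Id(b,a)$ gives $\Id(a,b)\wedge \Id(a,y) \leq \Id(b,y)$. Since also $\Id(a,y)\wedge \Ee b \leq \Id(a,y) \leq \Ee a$, we conclude $\Id(a,y)\wedge \Ee b \leq \Id(b,y)\wedge \Ee a$. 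The reverse inequality follows by interchanging $a$ and $b$, so the two agree for every $y$, i.e. $a \res \Ee b = b \res \Ee a$.

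The forward half is essentially immediate once the substitution $y = b$ is spotted, so the only genuine care is needed in the converse, where transitivity of $\Id$ must be chained with the bound $\Id(a,y) \leq \Ee a$ in exactly the right order to route the meet into $\Id(b,y)$. I do not expect a deeper obstacle: neither completeness of $T$ nor the envelope/negation apparatus plays any role here, and the whole statement is a lattice identity driven by symmetry, transitivity, and absorption — the content being entirely bookkeeping inside the Heyting algebra $T$.
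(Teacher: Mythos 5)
Your proof is correct, and both directions check out: the evaluation at $y=b$ does give $\Id(a,b)\wedge \Ee b = \Ee a \wedge \Ee b \leq \Id(a,b)$, and your converse chain ($\Id(a,y)\wedge\Ee b \leq \Id(a,b)\wedge\Id(a,y) \leq \Id(b,y)$, plus the symmetric swap) is exactly the lattice content needed. The route differs from the paper's in organization rather than in substance. The paper (following Badiou) packages the converse as a sequence of identities about the localisation operator: first $a \res \Id(a,b) = b \res \Id(a,b)$, then transitivity of localisation $(a \res p)\res q = a \res (p\wedge q)$, and finally $a \res (\Ee a \wedge \Ee b) = a \res \Ee b$, from which $a \ddagger b$ drops out by substituting the hypothesis $\Id(a,b) = \Ee a \wedge \Ee b$ inside the restriction. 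You instead unfold everything pointwise and run the same transitivity-plus-absorption computation directly on values in $T$. What the paper's packaging buys is reusable infrastructure --- those localisation identities are invoked again in the subsequent sheaf-condition proof (e.g.\ $\Ee(a\res p) = \Ee a \wedge p$ and the compatibility of $\res$ with the order structure) --- whereas your version is more self-contained and makes visible that only symmetry, transitivity, and finite meets are used, with neither completeness of $T$ nor the postulate of materialism playing any essential role (the postulate only serves to name the localised atoms by elements, which your pointwise formulation never needs). You also supply explicitly the forward direction that the paper dismisses as ``an easy consequence,'' which is a small but real improvement in completeness.
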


\begin{proof}[Sketch] The other implication entailed by the proposed, alternative definition has a bit lengthier proof. As a sketch, it needs first to be shown that $a \res \Id(a, b) = b \res \Id(a, b)$, and that the localization is transitive in the sense that $(a \res p) \res q = a \res (p \wedge q)$\endnote{Ibid., 271--272.}, that is, it is compatible with the order structure. Such compatibility is obviously required in general sheaf theory, but unlike in the restricted case of locales, Grothendieck-topoi relativise (by the notion of a sieve) the substantive assumption of the order-relation. Only in the case of \textit{locales} such hierarchical sieve-structures not only appear as local, synthetic effects but are predetermined globally---ontologically by the poset-structure $T$. Finally, once demonstrated that $a \res (\Ee a \wedge \Ee b) = a \res \Ee b$, the fact that $a \ddagger b$ is an easy consequence\endnote{Ibid. p. 273.}. 
\end{proof}

\begin{dfn}[Order-Relation]
Formally one denotes $a \leq b$ if and only if $\Ee a = \Id(a, b)$. This relation occurs now on the level of the object $A$ instead of the Heyting algebra $T$. It is again an easy demonstration that $a \leq b$ is equivalent to the condition that both $a \ddagger b$ and $\Ee a \leq \Ee b$. Furthermore, it is rather straightforward to show that the relation $\leq$  is reflexive, transitive, and anti-symmetric\endnote{Ibid. p. 258.}. 
\end{dfn}

\begin{proof}[Proof of the Theorem \ref{scholium}]

The proof of the sheaf-condition is now based on the equivalence of the following three conditions: 
\begin{eqnarray*}
a & = & b \res \Ee a \qquad  \iff \\  a  \ddagger  b & \textrm{and} &  \Ee a  \leq  \Ee b \qquad \iff \\ \Ee a & =&  \Id(a, b).\end{eqnarray*} These may be established by showing that $\Ee a = \Id(a, b)$, if and only if $a = b \res \Ee a$. The sufficiency of the latter condition amounts to first showing that $\Ee (a \res p) = \Ee a \wedge p$\endnote{Ibid., 273--274.}.

To proceed with the proof, we need to connect the previous relations to the envelope $\Sigma$. First, it needs to be shown that if $b \ddagger b'$, then $$b(x) \wedge b'(y) \leq \Id(x, y)$$ for all $x, y$. This follows easily from the previous discussion. The crucial part is now to show that the function 
$$\pi(x) = \Sigma \{ \Id(b, x) \mid b \in B\}$$ is an atom if the elements of $B$ are compatible in pairs\endnote{Ibid. p. 263.}. This is because it then retains a 'real' element which materialises such an atom. The first axiom (A1)
$$\Id(x, y) \wedge \pi(x) \leq \pi(y)$$ is straightforward\endnote{See ibid. p. 264.}. Now 
$\pi(x) \wedge \pi(y) = \Sigma\{\Id(b, x) \wedge \Id(b', y) \}$ and by the previous $\Id(b, x) \wedge \Id(b', y) \leq \Id(x, y)$ so $\Id(x,y)$ is an upper boundary, but since the previous $\Sigma$ is the least upper bound, we have $\pi(x) \wedge \pi(y) \leq \Id(x,y)$. Therefore $\pi$ is an atom and  can denote by $\epsilon$ the the corresponding real element. Then it is possible to demonstrate that $\Ee \epsilon = \Sigma \{\Ee b \mid b \in B\}$. It follows that $\epsilon$ itself is actually the least upper bound of $B$: there exists a real synthesis of $B$\endnote{Ibid., 265--266.}. 

Badiou characterises this 'transcendental functor of the object'\endnote{Ibid. p. 278.}, in other words this \textit{sheaf}, as 'not exactly a function' as it associates rather than elements, 'subsets'. A sheaf can thus be expressed as a strata in which each neighborhood (transcendental degree) $U$ becomes associated with the \textit{set} of sections defined over $U$, usually denoted by $\F(U)$. Therefore, a sheaf is actually a \textit{functor} $\cee^{op} \to \sets$, where $\ce$ is a category. In Badiou's restricted case $\ce$ is determined to be the particular kind of category $\ce_T$\endnote{Its objects are the elements $\Ob(\ce_T) = \Power T$. Now for $p, q \in T$ define $\Hom_{\ce_T}(p, q) = \{\leq\}$ if $p \leq q$ and $\Hom_{\ce_T}(p, q) = \emp$ otherwise.} deriving directly from the poset $T$.  It results with a functor ${\ce_T}^{op} \to \sets$ in the following manner. Formally, for an object $A$, define $\F_A(p) = \{x \mid x \in A \textrm{ and } \Ee x = p \}$. If there is any $y \in \F_A(p)$ with $\Ee y = p$, then the equation $\Ee(y \res q) = \Ee y \wedge q$ amounts to $\Ee(y \res q) = p \wedge q$. If $q \leq p$ then $\Ee y \res q = q$ giving rise to a commutative diagram: 
$$\xymatrix{p \ar[r]^{\F_A} \ar[d]^{\leq} & \F_A(p) \ar[d]^{\cdot \res q} \\
q \ar[r]^{\F_A} & \F_A(q)}$$ which guarantees $\F_A$ to be a functor and thus a presheaf. 

Finally, one needs to demonstrate the sheaf-condition, the 'real synthesis' in Badiou's terminology. The functor $J(p) = \{\Theta \mid \Sigma \Theta = p\}$ forms a 'basis' of a so called Grothendieck-topology on $T$ (see the next section). Let me now consider such a basis $\Theta$ and a collection $x_q$ of elements where $q \in \Theta \in J(p)$. it derives from an imaginary section $x_p$, the elements would be pairwise compatible. In such a case, let me assume that they satisfy the 'matching' condition $x_q \res (q \wedge q') = x_{q'} \res (q \wedge q')$, which implies that $x_q \ddagger x_{q'}$. One would thus like to find an element $x_p$, where $x_q = x_p \res q$ for all $q \in \Theta$. But to demonstrate that the elements $x_q$ commute in the diagram, they need to be shown to be pairwise compatible. One thus chooses $x_p$ to be the envelope $\Sigma \{x_q \mid q \in \Theta\}$ and it clearly satisfies the condition. Namely, we just demonstrated that the envelope $\Sigma\{x_q\}$ localises to $x_q$ for all $q$, that is, 
$$\Sigma\{x_q\} \res q = x_q, \qquad \forall q,$$
and that $\Ee \Sigma \{x_q\} = p$. The fact that it is  unique then will do the trick. In the vocabulary of the next section, we have thus sketched Badiou's proof that objects are those of the topos $\Sh(T, J)$ (see the following remark).
\end{proof}

As a result, Badiou nearly succeeds in establishing the first part of his perverted project to show that $T$-sets form a topos (while claiming to work on topos theory more broadly). In other words, $T$-sets are 'capable of lending consistency to the multiple' and express sheaf as a set 'in the space of its appearing'\endnote{Badiou, \textit{Logics of Worlds}, 2009. pp. 225--226.}. 

 \begin{rmk}\label{proofgap}
As a final remark, what Badiou disregards in respect to the definition of an object, given a region $B \subset A$, whose elements are compatible in pairs, he demonstrates that the function $\pi(x) = \Sigma\{ \Id(b, x) \mid b \in B\}$ is an atom of $A$. If one wants to consider the smallest $\bar B$ containing $B$ within $A$ which itself is an object, the 'real' element representing $\pi(x)$ should by the postulate of materialism itself lie in the atom: $\epsilon \in \bar B$ and thus because all elements are compatible, every element $b' \in \bar B$ has $b' \leq b$. Therefore, the sub-objects of the object $A$ are generated by the ideals $\downarrow(\epsilon)$, each of them purely determined by the arrow $1 \to \{\epsilon\} \to \bar B$. (See remark \ref{completegap}.) 

\end{rmk}

\section{Categorical and Changing Forms of 'Materiality'}

As discussed in the previous section, category theory stands for the broader tendency in which mathematics moves away from the questions of contents and consistence of being towards those related to one's composition, coherence and \textit{being-there}: the question of 'taking place' in a topos. \textit{Topos} is a Greek phrase that refers to a ‘place’ or a ‘commonplace’, indeed, and it quite interestingly relates phenomenological concepts like 'taking place' and 'being-there' to mathematics. Thid has to do with Kant's reasoning but also the long discussion that followed. It particularly contests Badiou's own 'Platonic' position on the so called 'calculated phenomenology', whose formal foundations (the theory of external complete Heyting algebras) was expressed above. 

Instead of denouncing the question of being, however, we would rather say that topos theory makes the two discourses \textit{intersect}: the ontological one of being and the phenomenological one of being-there. Sheaf theory was the first implication of this encounter, and it is crucial to understand sheaf theory both to understand Badiou's own aims but also to introduce the definition of Grothendieck-topoi which also inspired the broader definition of elementary topoi. A sheaf in particular is a functor which composes different sections so as to categorically 'represent' the idea of a \textit{function} without referring to it on the level of set-theoretic elements. 

\begin{dfn}
 If $X$ is a topological space, then for each open set $U$ the sheaf $\F$ consists of an object $\F(U) \in \Ob(\ce)$ satisfying two conditions. First, there needs to exist certain 'natural' inclusions, so called restrictions $$\F(U) \to \F(U'): f_U \mapsto f_U|_{U'}$$ for all open sets $U' \subset U$. There is also a \textit{compatibility condition} which states that given a collection $f_i \in \F(U_i)$, there needs to be $f \in \F(\bigcup_i U_i)$ so that $f|_{U_i} = f_i$ for all $i$. In the categorical language, the sheaf-condition means that the diagram 
$$\F(U) \to \prod_i \F(U_i) \rightrightarrows \prod_{i, j} \F(U_i \times_U U_j)$$ is a coequaliser for each covering sieve $(U_i)$ of $U$, usually determined as the set $J(U)$ of such sieves.
\end{dfn}

\begin{rmk}[Representable sheaves] \label{naturaltransformation}
Inspired by sheaf theory, we need to consider them as the so called 'natural transformations' (see \ref{naturaltransformation}, p. \pageref{naturaltransformation}).  Such transformations are dealt with by the so called Yoneda Lemma that is the starting point of any intelligible application of category theory.

Indeed, based on Yoneda lemma, the transformation from 'classical spaces' to sheaves is natural in the sense that given any 'classical space' denoted as an object $A$, the natural transformations $\xymatrix@C+2pc{
\ce \rtwocell^{h_A}_{\F} & \ce}$ between the functors $h_A$ and $\F$ corresponds to the elements of $\F(A)$. If 'spaces' are regarded themselves equated with their corresponding (pre)sheaves $h_A$, we then have $F(A) = \Hom(A, F)$. This gives as the famous Yoneda embedding
$$\mathbf{y}: \ce \to \sets^{\ce^{op}},$$
which embeds the index-category $\ce$ into its corresponding Grothendieck-topos $\sets^{\ce^{op}}$. It maps objects of $\ce$ into \textit{representable sheaves} in $\sets^{\ce^{op}}$.

\end{rmk}


\begin{rmk}The above definition is based on the Bourbakian, set-theoretic definition of topology as a collection of open subsets $\O(X)$ (which is a locale). It gives rise to a similar structure Badiou defines as the 'transcendental functor', that is, a sheaf identified with the set-theoretically explicated \textit{functional strata}. Topos theory makes an alternative, categorical definition of topology---for example the so called Grothendieck-topology. Regardless of the topological framework, the idea is to categorically impose such structures of localization on $\sets$ that enforce 'points', and thus the 'space' consisting of such points, to emerge. Whereas set theory presumes the initially atomic primacy of points, the sheaf-description of space does not assume the result of such localization---the actual points---as analytically given but only synthetic result of the functorial procedure of localisation. However, inasmuch as sheaf-theory---and topos theory---aims to bridge the categorical register with the 'ontological' sphere of sets, such a strata of sets is still involved even in the case of topos theory. For a more general class of such categorical stratifications, one may move to the theory of grupoids and stacks, that is, categories fibred on grupoids.
\end{rmk}

\begin{dfn} [Grothendieck-topology] \label{sieve}
Let $\ce$ be a category. 
 A \textit{sieve}---in French a 'crible'---on $C$ is a covering family of $C$ so that it is downwards closed.
A \textit{Grothendieck-topology} on a category $\ce$ is a function $J$ assigning a collection $J(C)$ of sieves for every $C \in \ce$ such that 
\begin{enumerate}
\item the maximal sieve $\{f \mid \textrm{ any } f: D \to C\} \in J(\ce)$,
\item (stability) if $S \in J(C)$ then $h^*(S) \in J(D)$ for any $h: D \to C$, and
\item (transitivity) if $S \in J(C)$ and $R$ is any sieve on $C$ such that $h^*(R) \in J(D)$ for all arrows $h: D \to C$, then $R \in J(C)$. 
\end{enumerate}
\end{dfn}

The stability condition specifies that the intersections of two sieves is also a sieve. Given the latter two conditions, a topology itself is a sheaf on the maximal topology consisting of all sieves. It is often useful to consider a basis; for example Badiou works on such a 'basis' in the previous proof without explicitly stating this. 

\begin{dfn}A basis $K$ of a Grothendieck-topology $J$ consists of collections (not necessarily sieves) of arrows:
\begin{enumerate}
\item for any isomorphism $f: C' \to C$, $f \in K(C)$, 
\item if $\{f_i: C_i \to C \mid i \in I\} \in K(C)$, then the pull-backs along any arrow $g: D \to C$ are contained in $\{\pi_2: C_i \times_C D \to D \mid i \in I\} \in K(D)$, and 
\item
if $\{f_i: C_i \to C \mid i \in I\} \in K(C),$ and if for each $i \in I$ there is a family $\{g_{ij}: D_{ij} \to C_i \mid j \in I_i\} \in K(C_i)$, also the family of composites $\{f_i \circ g_{ij}: D_{ij} \to C \mid i \in I, j \in I_i\}$ lies in $K(C)$.\end{enumerate} 
For such a basis $K$ one may define a sieve $J_K(C) = \{S \mid S \supset R \in K(C) \}$ that generates a topology. 
\end{dfn}

\begin{rmk}
The notion of Grothendieck-topology enables one to generalise the notion of a sheaf to a broader class of categories instead of the (classical) category of Kuratowski-spaces. 
 If $\esp$ is the category of topological spaces and continuous maps, sheaves on $X \in \Ob(\ce)$ are actually equivalent to the full subcategory of $\esp/(X, \ce)$ of local homeomorphism $p: E\to X$. The corresponding pair of adjoint functors $\sets^{\ce^{op}} \rightleftarrows \esp/(X, \ce)$ yields an associated sheaf functor $\sets^{\ce^{op}} \to \Sh(X)$ which shows that sheaves can be regarded either as presheaves with exactness condition or as spaces with local homeomorphisms into $X$\endnote{Johnstone, \textit{Topos Theory}, 1977, pp. 10--11.}.
 \end{rmk}
 
 \begin{rmk}
 In the case of Grothendieck-topology, there is an equivalence of categories between the category of pre-sheaves $\sets^{\ce^{op}}$ and the category of sheaves $\Sh(\ce, J)$ where $J$ is the so called canonical Grothendieck-topology. Thus one often omits the reference to particular topology and deals with presheaves $\sets^{\ce^{op}}$ instead, even if their objects do not satisfying the two sheaf-conditions.
 \end{rmk}

\begin{rmk} \label{completegap}
Now we have introduced the formalism required to accomplish the final step of Badiou's proof in the previous section (see remark \ref{proofgap}). 
Namely, the category $\sets^{T^{op}}$ is generated by representable presheaves of the form $y(p): q \mapsto \Hom_T(q, p)$ by the Yoneda lemma.  Let $$a: \sets^{T^{op}} \to \Sh(T, J)$$ be the associated sheaf-functor $P \mapsto P^{++}$\endnote{The construction of the sheaf-functor proceeds by defining 
$$P^+ = \varinjlim_{R\in J(C)} Match(R, P), \qquad P \mapsto P^{++},$$ where $Match(R,P)$ denotes the matching families. When applied twice, it amounts to a left-adjoint
$$a: \sets^{\ce^{op}} \to \Sh(\ce, J)$$ which sends a sheaf to itself in the category of presheaves.}. Then because $T$ is a poset, for any $p \in T$, the map $\Hom(\cdot, p) \to 1$ is a mono, and because $a$ is left-exact, also $ay(p) \to 1$ is a mono\endnote{Mac Lane \& Moerdijk, \textit{Sheaves in Geometry}, 1992, 277.}. Through it, any sheaf is a subobject of 1.
This is exactly the axiom of support generators (SG) that is crucial to Badiou's constitutive postulate of materialism: it follows exactly from the organization of $T$ as an ordered poset and this order-relation being functorially extendible to general objects of $\Sh(T, J)$. 
\end{rmk}

\section{Historical Insight: From Categories to Geometry}

But what makes categories historically remarkable and, in particular, what demonstrates that the categorical change is genuine? Of course, as we have done, it is one thing to discuss and demonstrate how Badiou fails to show that category theory is not genuine. But it is another thing to say that mathematics itself does change, and that the 'Platonic' \textit{a priori} in Badiou's endeavour is insufficient. For this we need something more tangible, something more \textit{empirical}. 

Yet the empirical does not need to stand only in a way opposed to mathematics. Rather, it relates to results that stemmed from and would have been impossible to comprehend without the use of categories. It is only through experience that we are taught the meaning and use of categories. An experience obviously absent from Badiou's habituation in mathematics. 

To contrast, Grothendieck opened up a new regime of algebraic geometry by generalising the notion of a space first scheme-theoretically (with sheaves) and then in terms of grupoids and higher categories\endnote{Higher categories are much more 'Grothendieckian' than are the Grothendieck-topoi.}.  Topos theory became synonymous to the study of categories that would satisfy the so called Giraud's axioms\endnote{Johnstone, \textit{Topos Theory}, 1977, p. xii.} based on Grothendieck's geometric machinery (though only a limited part of it). By utilising such tools, Pierre Deligne was able to prove the so called Weil conjectures, mod-$p$ analogues of the famous Riemann hypothesis. 

These conjectures---anticipated already by Gauss---concern the so called local $\zeta$-functions that derive from counting the number of points of an algebraic variety over a finite field, an algebraic structure similar to that of for example rational $\Q$ or real numbers $\R$ but with only a finite number of elements. By representing algebraic varieties in polynomial terms, it is possible to analyse geometric structures analogous to Riemann hypothesis but over finite fields $\Z/p\Z$ (the whole numbers modulo $p$). Such 'discrete' varieties had previously been excluded from topological and geometric inquiry, while it now occurred that geometry was no longer overshadowed by a need to decide between 'discrete' and 'continuous' modalities of the subject (that Badiou still separates). 

Along with the continuous ones, also discrete variates could then be studied based on Betti numbers, and similarly as what Cohen's argument made manifest in set-theory, there seemed to occur 'deeper', topological \textit{precursors} that had remained invisible under the classical formalism. In particular, the so called étale-cohomology allowed topological concepts (e.g., neighbourhood) to be studied in the context of algebraic geometry whose classical, Zariski-description was too rigid to allow a meaningful interpretation. Introducing such concepts on the basis of Jean-Pierre Serre's suggestion, Alexander Grothendieck did revolutionarize the field of geometry, and Pierre Deligne's proof of the Weil-conjenctures, not to mention Wiles' work on Fermat's last theorem that subsequentely followed. 

Grothendieck's crucial insight drew on his observation that if morphisms of varieties were considered by their 'adjoint' field of functions, it was possible to consider geometric morphisms as equivalent to algebraic ones. The algebraic category was restrictive, however, because field-morphisms are always monomorphisms which makes geometric morphisms epis: to generalize the notion of a neighbourhood to algebraic category he needed to embed algebraic fields into a larger category of rings. While a traditional Kuratowski covering space is locally 'split'---as mathematicians call it---the same was not true for the dual category of fields. In other words, the category of fields did not have an operator analogous to pull-backs (fibre products) unless considered as being embedded within rings fro which pull-backs have a co-dual expressed by the tensor operator $\otimes$.  Grothendieck thus realized he could replace 'incorporeal' or contained neighborhoods $U \into X$ by a more relational description: as maps $U \to X$ that are not necessarily monic, but which correspond to ring-morphisms instead.

Topos theory applies similar insight but not in the context of only specific varieties but for the entire theory of sets instead. Ultimately, Lawvere and Tierney realized the importance of these ideas to the concept of classification and truth in general. Classification of elements between two sets comes down to a question: does this element belong to a given set or not? In category of $\sets$ this question calls for a binary answer: true or false. But not in a general topos in which the composition of the subobject-classifier is more geometric.

Indeed, Lawvere and Tierney then considered this \textit{characteristc} map 'either/or' as a categorical \textit{relationship} instead without referring to its 'contents'.  It was the structural form of this morphism (which they called 'true') and as contrasted with other relationships that marked the beginning of \textit{geometric logic}. 
They thus rephrased the binary complete Heyting algebra of classical truth with the categorical version $\Omega$ defined as an object, which satisfies a specific pull-back condition. The crux of topos theory was then the so called Freyd--Mitchell embedding theorem which effectively guaranteed the explicit set of elementary axioms so as to formalize topos theory. 

We will come back to this definition later. But to understand its significance as a link between geometry and language, it is useful to see how the characteristic map (either/or) behaves in set theory. In particular, by expressing truth in this way, it became possible to reduce Axiom of Comprehension\endnote{\label{axiomofcomprehension}If $\lambda$ is a property, then the set discerned by this property actually exists. In set-theoretical formalism it can be expressed as $$\forall w_1, \ldots, w_n \forall A \exists B \forall X (x \in B \iff [x \in A \wedge \lambda(x, w_1, \ldots, w_n, A)]),$$ where $x, w_1, \ldots, w_n$ are free variables of the statement $\lambda$.}, which states that any suitable formal condition $\lambda$ gives rise to a peculiar set $\{x \in \lambda\}$, to a rather elementary statement regarding adjoint functors\endnote{Ibid., xiii.}. 

At the same time, many mathematical structures became expressible not only as general topoi but in terms of a more specific class of Grothendieck-topoi. There, too, the 'way of doing mathematics' is different in the sense that the object-classifier is categorically defined and there is no empty set (initial object) but mathematics starts from the terminal object 1 instead. However, there is a material way to express the 'difference' such topoi make in terms of set theory: for every such a topos there is a sheaf-form enabling it to be expressed as a category of sheaves $\sets^{\ce}$ for a category $\ce$ with a specific Grothendieck-topology. 

\section{Postulate of Materialism---Or Two Postulates Instead?}

Let me now discuss the postulate of materialism from the categorical point of view to see how and why geometry matters to the question of materiality, and even in relation to set theory. Badiou's own defintion is a strong or constrained one and we will explicate the condition which enables us to specify whether a 'weakly material' topos is actually local, that is, constitutively material in Badiou's strong sense of the term. Therefore, we begin with a more general, abstract expression of Badiou's objects (in terms of elementary theory) even if they do satisfy also the more specific conditions (local theory).  

Indeed, Badiou's objects may always be interpreted as Grothendieck-topoi (situated in $\sets^{T^{op}}$) which is a 2-category distinct from the more abstractly designated $2$-category elementary topoi ($\Top$). An elementary topos $\Es$ is a Grothendieck-topos only if there is a particular morphism $\Es \to \sets$ that 'materialises' its internal experience of truth even if this morphism wasn't 'bounded' in the sense that the internal logic of the topos $\Es$ isn't reducible to the external logic of $T$-$\sets$ (or $\sets$). 

\begin{pos} [Weak Postulate of Materialism]
Categorically the weaker version of the postulate of materialism signifies precisely condition which makes an elementary topos a so called Grothendieck-topos (see remark \ref{grothendiecktopos}). In particular, it is a topos $\Es$ with a specific \textit{materialization} morphism $\Es \to \sets$. 
\end{pos}

Let us now discuss how this weak postulate of materialism relates to what Badiou takes as the 'postulate of materialism'. A Groethendick-topos can generally be written as $\sets^{\ce^{op}}$ defined over any category $\ce$ in which points might have internal automorphisms for example. This means that the 'sections' do \textit{materialise} in the category of $\sets$ but its \textit{internal structure} is still irreducible to this material, set-theoretic surface of appearance. In Badiou's case it is this this internal structure which is not only material but  \textit{bounded} in its very materiality. In effect, if the underlying 'index-category' $\ce = \ce_T$ a \textit{poset} corresponding to the external Heyting algebra $T$, any internal 'torsion' such as non-trivial endomorphisms of $\ce$ are extensively forced out. 

What is crucial to Badiou's demonstration is that objects are atomic---that every atom is 'real'. That is what we phrase as the \textit{postulate of atomism} and in elementary topos theory it is often phrased as the axiom regarding the support of generators (SG). Let us overview its categorical meaning. It accounts for the unity and singularity of the terminal object $1$. In general, an object may be said to retain 'global elements' $1 \to X$, but there are also 'local elements' $U \to X$ that might not be determinable by global elements alone. For each arrow $U \to X$, a 'local' element on $U$, the pull-back object $U^*X$ in the local topos $\Es/U$ has exactly one global element of $U^*X$, that is, a local element of $X$ over $U$\endnote{Ibid., 39.}. The object $X$, as a whole, cannot be determined by 'global elements' alone, if there occurs torsion that obstructs such a dominant hierarchy pertinent to the category of $T$-$\sets$---a poset-structure analogous to Cohen's forcing-structure and that Badiou associated with his '\textit{fundamental law of the subject}'\endnote{Badiou, \textit{Being and Event}, 2006, p. 401.}. Only if such a hierarchy is given once and for all, does geometric intuition become superfluous. But for example in the case of the Möbius strip, that is, the non-trivial $\Z/2\Z$-bundles give rise to a category which cannot be similarly forced into a hierarchical one. 

On the basis of the extent to which elements in a $X$ are discerned by its global elements, one can now phrase the postulate of atomism in the categorical framework. Of course, unless working in local theory, this does not yet result with the complete 'postulate of materialism' in Badiou's sense. Rather, we need to combine that with the weak postulate of materialism.  

\begin{axiom}[Postulate of Atomism]
An elementary topos $\Es$ \textit{supports generators} (SG) if
the subobjects of 1 in the topos $\Es$ generate $\Es$. This means that given any pair of arrows $f \neq g: X \rightrightarrows Y$, there is $U \into X$ such that the two induced maps $U \rightrightarrows Y$ do not agree. In the case of $\Es$ being defined over the topos of $\sets$, the axiom (SG) is equivalent to the condition, that the object 1 alone generates the topos $\Es$\endnote{Johnstone, \textit{Topos Theory}, 1977, p. 145.}. If $\Es$ satisfies (SG), then $\Omega$ is a cogenerator of $\Es$---differenciating between a parallel pair from the right---and for any topology $j$ or an internal poset $\mathbf{P}$, both $\Es^{\mathbf{P}}$ and $\Sh_j(\Es)$ satisfy (SG)\endnote{Ibid., 146.}. 
\end{axiom}

Based on the two postulates---the weak postulate of materialism and the postulate of atomism---it is now possible to draw the general and abstract \textit{topos-theoretic conditions} of Badiou's 'postulate of materialism' which should rather be phrased as the \textit{postulate of atomic materialism}. 

\begin{pos}[Strong Postulate of Materialism] In addition to the weak postulate of materialism, the \textit{strong postulate of materialism} presumes that a Grothendieck-topos defined over $\sets$ should additionally satisfy the postulate of atomism, that is, that its objects are generated by the subobjects of the terminal object 1. This is the postulate of atomic materialism.
\end{pos}

According to the following proposition, the topoi satisfying the strong postulate are shown to be \textit{local topoi} (over $\sets$) and \textit{vice versa}. Namely, for any such topos one chooses the transcendental $T = \gamma_*(\Omega)$ for its morphism of \textit{materialist appearance} $\gamma$, and the equivalence then follows. 

\begin{prop}
If an elementary topos $\Es$ is Grothendieck-topos defined over $\gamma: \Es \to \sets$, then it supports generators if and only if $\gamma: \Es \to \sets$ is \emph{logical} and thus \emph{bounded}. This means that the internal complete Heyting algebra $\Omega$ transforms into an external complete Heyting algebra $\gamma_*(\Omega)$ and then $\Es$ is equivalent to the topos $\sets^{\gamma_*(\Omega)^{op}}$. 
\end{prop}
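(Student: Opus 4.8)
The plan is to prove the biconditional by recognising that the condition (SG) is exactly the condition that $\Es$ be \emph{localic}, and then to produce an explicit equivalence $\Es \simeq \Sh(T, J)$ for $T := \gamma_*(\Omega)$ with $J$ the join-topology $J(p) = \{\Theta \mid \Sigma\Theta = p\}$ of Section~\ref{tsetisasheaf}. Since every Grothendieck-topos carries a unique geometric morphism $\gamma = (\gamma^*, \gamma_*) : \Es \to \sets$, with $\gamma_* = \Hom_\Es(1,-)$ the global-sections functor, and since Grothendieck-topoi are precisely the bounded $\sets$-topoi by Giraud's axioms, boundedness is automatic; I therefore read the clause ``$\gamma$ logical and thus bounded'' as the assertion that $\Es$ is localic with its internal logic recovered externally, i.e. $\Es \simeq \Sh(T,J)$ with $T = \gamma_*(\Omega)$, so that the genuine content is the biconditional (SG)~$\iff$~($\Es$ localic via $T$). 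I would first record the computation: the elements of $T = \gamma_*(\Omega)$ are $\Hom_\sets(1, \gamma_*\Omega) \cong \Hom_\Es(\gamma^*1, \Omega) = \Hom_\Es(1, \Omega) = \mathrm{Sub}_\Es(1)$, using $\gamma^* \dashv \gamma_*$, $\gamma^* 1 \cong 1$, and the universal property of $\Omega$; thus $T$ is exactly the poset of subobjects of the terminal object. I would then check that $T$ is an external complete Heyting algebra in the sense of the earlier definitions: finite meets are pullbacks of subterminals, arbitrary joins are unions of subobjects (these exist because $\Es$ is cocomplete and well-powered, so the join is a small colimit landing in $\mathrm{Sub}_\Es(1)$), implication is the Heyting structure of $\mathrm{Sub}_\Es(1)$, and the frame distributivity $\Sigma A \wedge b = \Sigma\{a \wedge b \mid a \in A\}$ holds because unions of subobjects are stable under pullback in a topos. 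This places us exactly in the localic setting with $T$ a locale and $J$ its canonical covering topology.

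For the easy direction ($\Leftarrow$), suppose $\Es$ is presented as $\Sh(T,J)$, i.e. $\gamma$ is the localic morphism attached to $T$. Then Remark~\ref{completegap} applies verbatim: each $ay(p)$ is a subobject of $1$ (the map $ay(p) \to 1$ is mono because $a$ is left-exact and $y(p) \to 1$ is mono in $\sets^{T^{op}}$), and these subterminal sheaves generate $\Sh(T,J)$ since every sheaf is a colimit of representables. Hence the subobjects of $1$ generate $\Es$, which is (SG). This direction merely reassembles material already in hand, so I would keep it brief.

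The substance is the forward direction ($\Rightarrow$). Assuming (SG), I would exhibit the comparison functor
$$ \ell_* : \Es \to \Sh(T,J), \qquad X \longmapsto \big(\, p \mapsto \Hom_\Es(p, X) \,\big), $$
where $p \in T$ is read as the subobject $p \into 1$ and restrictions are induced by inclusions $p' \le p$. First I would verify $\ell_* X$ is a $J$-sheaf: for a cover $\Sigma\Theta = p$ the family $\{p_i \into p\}_{p_i \in \Theta}$ is effective-epimorphic in $\Es$, because $\bigvee \Theta = p$ means $\coprod p_i \to p$ is epi and toposes are exact, so $\Hom_\Es(-,X)$ carries this union to exactly the matching-family equaliser that is the sheaf condition. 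Next I would obtain the equivalence by invoking the comparison lemma of Mac~Lane--Moerdijk: since $\Es$ is well-powered, $T = \mathrm{Sub}_\Es(1)$ is a \emph{small} full subcategory, and by (SG) it is a generating subcategory of subterminals, whence $\Es$ is equivalent to the category of sheaves on $T$ for the canonical topology. It then remains to identify the canonical topology induced on the poset $T$ with the join-topology $J$, i.e. to show a sieve on $p$ is canonically covering precisely when the join of its members is $p$; this is where the frame law secured above is used, since effective-epimorphic families of subterminals are exactly the join-covers. Concluding, $\Es \simeq \Sh(T,J)$, and under the identification of $\Sh(T,J)$ with the category of $T$-$\sets$ established in Section~\ref{tsetisasheaf} this is the asserted $\Es \simeq \sets^{T^{op}}$; the morphism $\gamma$ is then the localic (``logical'') morphism whose direct image recovers the external logic $T = \gamma_*(\Omega)$ from the internal $\Omega$.

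The main obstacle is precisely this forward equivalence: promoting ``subterminals generate'' to a full equivalence $\Es \simeq \Sh(T,J)$ is not formal. Two points demand care. First, full faithfulness and essential surjectivity of $\ell_*$ rest on the comparison lemma and hence on verifying that the canonical (effective-epimorphic) topology on the generating poset $T$ coincides with $J$ --- a step that genuinely uses frame distributivity and the pullback-stability of unions of subobjects. Second, the size conditions must be kept honest: well-poweredness of the Grothendieck-topos $\Es$ is exactly what makes $T$ a set, hence a legitimate small site, so that $\Sh(T,J)$ is again a Grothendieck-topos; without it even $\sets^{T^{op}}$ would fail to be defined. I would therefore organise the write-up so that the frame structure on $T$ and the well-poweredness of $\Es$ are established \emph{before} the comparison lemma is invoked, with the topology-matching step as the final and most delicate checkpoint.
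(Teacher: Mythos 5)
Your proposal is correct, but it is worth noting that it does something the paper itself never does: it actually proves the proposition. In the paper this statement carries no proof at all --- it is asserted and backed by citations to Johnstone (\textit{Topos Theory}, 1977, pp.~145--146 for (SG), and p.~150 for the fact that $\gamma_*(\Omega)$ is an external complete Heyting algebra exactly in the bounded/localic case), with only the easy direction effectively covered in-text by Remark~\ref{completegap}. Your route --- computing $\gamma_*(\Omega) \cong \mathrm{Sub}_{\Es}(1)$, establishing its frame structure, and then running the restricted Yoneda embedding through the Mac~Lane--Moerdijk comparison lemma with the topology-matching step as the final checkpoint --- is precisely the standard argument that the paper delegates to the literature, with the genuinely non-formal steps (smallness of $\mathrm{Sub}_{\Es}(1)$ via well-poweredness, and the identification of the induced canonical topology with the join-topology $J$) correctly isolated. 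Two points where your treatment is a repair rather than a mere expansion: first, you rightly observe that the statement read literally is trivial-or-false, since every Grothendieck-topos over $\sets$ is bounded by Giraud's theorem, so the biconditional only acquires content once ``logical and thus bounded'' is reinterpreted as ``localic with $T = \gamma_*(\Omega)$''; the paper never acknowledges that this reinterpretation is needed. Second, you are more careful than the paper about the distinction between $\Sh(T,J)$ and the presheaf category $\sets^{T^{op}}$: the proposition asserts an equivalence with the presheaf category, and the paper elsewhere even claims sheafification is an equivalence of categories, which is false as stated; your gloss through the $T$-sets equivalence of Section~\ref{tsetisasheaf} is the only way to make the conclusion accurate. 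What the paper's citation-based approach buys is brevity; what yours buys is an actual self-contained argument and, in passing, a diagnosis of two defects in the statement being proved.
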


\begin{rmk}In general topos theory, the subobjects (monics) of the internal subobject-classifier  $\Omega$ form a class of arrows $\Hom(\Omega, \Omega)$ as much as the global elements of $\Omega$, that is, the arrows $1 \to \Omega$, corresponds to the subobjects of $1$. For any object $U$ there are also the subobjects $U^{*}\Omega$ corresponding to arrows $U \to \Omega$, so called \textit{generalised points} whereas the localic theory of $T$-sets fails to resonate with such generalisation. Therefore, the designation of $T$ as a set of global points is possible \textit{only} for Grothendieck-topoi that support generators.
\end{rmk}

\begin{rmk}
Obviously, if one works on an elementary topos $\Es$ which is not defined over $\sets$ (contrary to the case of Grothendieck-topoi), nothing guarantees that the axiom SG would imply the topos to be a locale. The implication is valid only when $\Es$ is a Grothendieck-topos in the first place. The axiom SG could thus be taken as an alternative but incomplete version of the 'postulate of materialism', whereas the Grothendieck-condition together makes it strong or constitutive in Badiou's sense. But because SG alone does not guarantee the existence of the materialization morphism $\gamma: \Es \to \sets$, we can say that the strong postulate of materialism is SG \textit{when} the weak condition already applies. In other words, it makes sense to treat the two versions of the postulate in a hierarchical fashion so that one is contained in the other.
\end{rmk}

\section{Mathematical Frontier of Dialectical Materialism?}

We now focus on the weak postulate of materialism. What makes topoi satisfying this condition particularly prone to reflecting the structure of Badiou's own, 'material dialectic' reasoning (which he then falsely believed to be unmathematical)? In fact, it is possible to say that the two postulates do mathematically embody the distinction between 'dialectical' and 'democratic materialism': that a local topos (where the strong postulate applies) is 'democratic' in the sense that there is only one dominant and thus 'general' will ($T$) materialized in sets. Or as Badiou says, it is a topos where 'existence = individual = body'. 

This is, of course, very different from Deleuze's view of democracy even if Badiou particularly mentions Deleuze in this context. Indeed, as opposed to post-structuralism, in a local topos there is only one body of truth whereas a Grothendieck-topos articulates the incoherence of bodies (at least tentatively). 

An atomic world thus prohibits subjective torsion as opposed to weakly materialist one where the 'fundamental law of the subject' constitutive to Badiou's view on mathematics no longer applies. Precisely by allowing torsion but regulating the object by categorical means, a Grothendieck-topos gives rise to an alternative notion of truth that should be taken as a \textit{mathematical basis} for Badiou's 'material dialectic'. 

Thus far we have postponed the formal definition of Grothendieck-topoi, even if such structures were considered while phrasing the weak postulate. There are two ways---set-theoretically material and categorically abstract---ways to introduce that concept. To begin with the first definition, \textit{sheaves} are defined in relation to three operators: compatibility, order and localization. 
In the case of a traditional (ontological) topological space $X$, a sheaf is just a presheaf $\F \in \sets^{\O(X)}$ satisfying compatibility condition. This definition of a sheaf applicable to a locale $\O(X)$ has, however, a flaw. Rather than there being an externally given poset of localizations (eg. $\O(X)$) \textit{once and for all}, we need to consider a multitude of such posets which are more or less compatible due to 'torsion'. 

Grothendieck followed this insight while he worked on étale-theory: he didn't reject the possibility of torsion in general while he still managed to consider algebraic structure in a \textit{material enough} way to relate them to set theory, that is, to allow them to materialize a more 'traditional' outlook on algebraic geometry which would be applicable at least locally in respect to a given hierarchy or domination. In other words, Grothendieck-topologies became to provide suitable 'snapshots' that were locally constrained, hierarchical---this allowed local sites to be dealt with logically without prohibiting torsion on the global scale. This technique proved helpful in multiple settings. In general topos theory, the entire notion of topology is consequentially replaced whereas Grothendieck's work on étale-theory still requires a (set-theoretically) material, even if altered definition of topology. The latter in fact allows the 'immaterial', that is, the torsion of the subject to materialize in a way that is not only categorically abstract but consequential to Badiou's own, 'ontological' world of reason. 

To formalize this trade-off between algebra and geometry, Grothendieck solved the problem of torsion---the treatment of the immaterial--- by means that are locally hierarchical, that is, they are (only) locally compatible with Badiou's strong postulate. Each such a hierarchical correlate is a so called \textit{sieve}, while the topology as a whole combines a synthesis over an entire class of them. Namely, a sieve is a structure that concerns coverings $U_i \to X$ (of non-monic, that is, non-injective neighbourhoods) but whose index-category is subject to 'partial' or 'local' hierarchies or filters---precisely the sieves. To contrast these structures with Badiou's reduced setting in which the question of \textit{a priori} is still overshadowed by an 'analytic', \textit{dominant} hierarchy or sieve, this indexing is \textit{retroactive} or 'synthetic' and may combine multiple mutually incompatible hierarchies. It is external to the underlying 'ontological', set-theoretic structure. 

 \begin{dfn}[Material definition]
 A \textit{Grothendieck-topology} $J$ on a category $\ce$ associates a collection $J(C)$ of \textit{sieves on} $C$ to every object $C \in \ce$, that is, it is a downward closed covering families on $C$ (see definition \ref{sieve}). 
 \end{dfn}

\begin{rmk}If $U \to X$ lies in $J(C)$, then any arrow $V \to U \to X$ has to reside there as well. Thus, the association law makes any sieve a \textit{right ideal}. In general, if the functor $y: C \to \Hom(\cdot, C): \ce \to \sets$ is considered as a presheaf $y: \ce \to \sets^{\ce^{op}}$, then a sieve is a \textit{subobject} $S \subset y(C)$ in the category of presheaves $\sets^{\ce^{op}}$.
\end{rmk}

\begin{dfn}[Closed sieve] A sieve $S$ on $C$ is called \textit{closed} if and only if for all arrows $f: D \to C$, the pull-back $f^*M \in J(D)$ implies $f \in M$, which in turn implies that $f^*M$ is the maximal sieve on $D$. 
\end{dfn}

\begin{rmk}
Similar structures are employed in the case of Cohen's procedure, and thus also in the $\BE$. The so called 'correct sets' $\female \in \ce$ are \textit{closed} similarly to sieves: whenever $p \in \female$ and $q \leq p$, then $q \in \female$. If such a 'multiple' $\female$ is 'generic'--- that is, if it intersects every domination---it 'covers' the whole space. 
\end{rmk}

\begin{dfn}[Subobject classifier] 
For a Grothendieck-site $\Sh(\ce, J)$, we can be define a subobject-classifier as 
$$\Omega(C) = \textrm{ the set of closed sieves on } C,$$ which, in fact, is not only a presheaf but a sheaf since the condition of closedness in the category of presheaves, $\Omega$ could be defined to consist of all sieves. In the context of \BE, we can consider the subobject-classifier as set of 'generic' filters relative to $S$.\end{dfn}

\begin{rmk}
In fact, given a sieve $S$ on $C$, one may define a closure of $S$ as\endnote{Mac Lane \& Moerdijk 1992, 141, Lemma 1.}
$$\bar S = \{h \mid h \textrm{ has a codomain } C, \textrm{ and } S \textrm{ covers } h\}.$$.

The closure operation is actually organic to the categorical definition of topology in elementary theory. In general, one defines topology in respect to the subobject-classifier $\Omega$ as an arrow $j: \Omega \to \Omega$ with $j^2 = j$, $j \circ \textrm{true } = \textrm{ true}$ and $j \circ \wedge = \wedge \circ (j \times j)$, where the morphism true is a unique map $1 \to \Omega$ related to the definition of an elementary topos (see definition \ref{toposdefinition}). 
\end{rmk}

As we pointed out above, in the \LW\ Badiou\endnote{Badiou, \textit{Logics of Worlds}, 2009. pp. 289--295} himself defines a Grothendieck-topology when he demonstrates the existence of the 'transcendental functor'\endnote{In fact, there is an implicit definition of a Grothendieck-topology involved already when Badiou expresses the Cohen's procedure in the $\BE$}, that is, the sheaf-object on the basis of a transcendental grading $T$. Badiou\endnote{Ibid. p. 291} defines the '\textit{territory} of $p$' as 
$$K(p) = \{\Theta \mid \Theta \subset T \textrm{ and } p = \Sigma \Theta\}.$$ They form a basis of a Grothendieck-topology. In short, every territory gives a sieve if it is completed under $\leq$-relation of $T$. 

\begin{rmk}
To formalise the compatibility conditions that Badiou's proof requires, let me define these in terms of a Grothendieck-topology. A sieve $S$ is called a \textit{cover} of an object $C$ if $S \in J(C)$. Let $P: \ce^{op} \to \sets$ be any presheaf (functor). Then a \textit{matching family} for $S$ of elements of $P$ is a function which for each arrow $f: D \to C$ in $S$ assigns an element 
$x_f \in P(D)$ such that $x_f \circ g = x_{fg}$ for all $g: E \to D$. The so called \textit{amalgamation} of such a matching family is an element $x \in P(C)$ for which $x \circ f = x_f$. In Badiou's terminology, such matching families are regarded as pairwise compatible subsets of an object. An amalgamation---a 'real' element incorporating an atom does not need to exist. $P$ is defined as \textit{a sheaf} when they do so uniquely for all $C \in \ce$ and sieves $S \in J(C)$. In Badiou's example, a matching family $\{x_f \mid f \in \Theta \subset T\}$ corresponds to an atom 
$$\pi(a) = \Sigma \{\Id(a, x_f) \mid f \in \Theta\},$$ and this is precisely what we established in the above proof of the sheaf-condition\endnote{$P$ is a sheaf, if and only if for every covering sieve $S$ on $C$ the inclusion $S \into h_C$ into the presheaf $h_C: D \mapsto \Hom(D, C)$ induces an isomorphism $\Hom(S, P) \cong \Hom(h_C, P)$. In categorical terms, a presheaf $P$ is a sheaf for a topology $J$, if and only if for any cover $\{f_i: C_i \to C \mid i \in I\} \in K(C)$ in the basis $K$ the diagram 
$$P(C) \to \prod_{i} P(C_i) \rightrightarrows \prod_{i, j \in I} P(C_i \times_C C_j)$$ is an equaliser of sets (where the 'fibre-product' on the right hand side is the canonical pull-back of $f_i$ and $f_j$ (Mac Lane \& Moerdijk 1992, 123). Furthermore, a topology $J$ is called \textit{subcanonical} if all representable presheaves, that is,  all functors $h_C: D \mapsto \Hom(D, C)$ are sheaves on $J$.}. 

The topology with a basis formed by 'territories' on $p$ is, in fact, subcanonical in the category of the elements of $T$ where arrows are the order-relations. This is because for any $p \in T$ one has $\Hom_T(q, p) = \{f_{qp} \mid q \leq p\}$, and therefore as a set $h_p = \{q \mid q \leq p\}$ becomes canonically indexed by $T$. This makes $h_p$ bijective to a subset of $T$. Now for any territory $\Theta$ on $p$, a matching family of such subsets $h_q$ are obviously restrictions of $h_p$. In general, for a presheaf $P$ we can construct another presheaf which can then be completed as a sheaf, that is, it can be sheafified in order to give rise to the uniqueness condition of 'atoms' similar to that discussed in \LW. Sheaves can always be replaced by presheaves and \textit{vice versa} in practical calculations. 
\end{rmk}

\section{Diagrammatic, Abstract Topos}

As we have now discussed the definition of the more basic structure of Grothendieck-topoi, let us now focus on the more abstract, \textit{elementary} definition of a topos and discuss (weak) \textit{materiality} then in this categorical context. The materiality of being can, indeed, be defined in a way that makes no material reference to the category of $\sets$ itself. 

The stakes between being and materiality are thus reverted. From this point of view, a Grothendieck-topos is not one of sheaves over sets but, instead, it is a topos which is not defined based on a specific geometric morphism $\Es \to \sets$---a materialization---but rather a one for which such a materialization exists only when the topos itself is already intervened by an explicitly given topos similar to $\sets$. Therefore, there is no need to start with set-theoretic structures like sieves or Badiou's 'generic' filters. At the same time, the strong postulate receives a categorical version: 

\begin{dfn}[Strong Postulate, Categorical Version] For a given materialization the situation $\Es$ is faithful to the \textit{atomic} situation of truth ($\sets^{\gamma_*(\Omega)^{op}}$) if the \textit{materialization morphism} itself is \textit{bounded} and thus \textit{logical}. 
\end{dfn}

In particular, this alternative definition suggests that \textit{materiality itself is not inevitably a logical question}\endnote{More precisely, when working over the $2$-category of elementary topoi $\Top$, if $f: \Es \to \Fs$ is a geometric morphism which is bounded, there is an inclusion $\Es \into \Fs^{\CEE}$, where $\CEE$ is an 'internal category' of $\Fs$. This follows from the so called Giraud--Mitchell--Diaconescu-theorem. Unless the geometric morphism $\gamma: \Es \to \sets$ is bounded, the $\gamma^*(\Omega_{\Es})$ does not need to be a complete Heyting algebra. Only in the bounded, 'strong' case $\gamma_*(\Omega_{\Es})$ is an external complete Heyting algebra and it is then the full subcategory of open objects in $\Es$. Ibid., 150.}.

Therefore, for this definition to make sense, let us look at the question of materiality from a more abstract point of view: what are topoi or 'places' of reason that are not necessarily material or where the question of materiality differs from that defined against the 'Platonic' world of $\sets$? Can we deploy the question of materiality without making any reference---direct or sheaf-theoretic---to the question of what the objects 'consist of', that is, can we think about materiality without crossing Kant's categorical limit of the object? 
Elementary theory suggests that we can.  

\begin{dfn}[Elementary Topos]
An \textit{elementary topos} $\Es$ is a category which
\begin{enumerate} \label{toposdefinition}
\item has finite limits, or equivalently $\Es$ has so called pull-backs and a terminal object 1\endnote{A terminal object $1$ means that for every object $X$ there is a \textit{unique} arrow $X \to 1$, every arrow can be extended to terminate at 1. To the notion of a pull-back (or fibered product) we will come to shortly.}, 
\item is Cartesian closed, which means that for each object $X$ there is an \textit{exponential functor} $(-)^X: \Es \to \Es$ which is right adjoint to the functor $(-) \times X$\endnote{Generally a functor between categories $f: \Fs \to \Gs$ maps each object $X \in \Fs$ to an object $f(X) \in \Gs$ and each arrow $a: X \to Y$ to an arrow $f(a): f(X) \to f(Y)$ 'naturally' in the sense that $f(a \circ b) = f(a) \circ f(b)$. Therefore, $f$ induces a map $f: \Hom_{\Fs}(X, Y) \to \Hom_{\Gs}(f(X), f(Y))$. If $g: \Gs \to \Fs$ is another functor, then $g \circ f: \Es \to \Es$ and $f \circ g: \Es \to \Es$ are two functors. It is said that another functor $g: \Gs \to \Fs$ is right (resp. left) adjoint of $f$, notated by $f \dashv g$, if given any pair of objects $X \in \Es$ and $Y' \in \Gs$ there is also a 'natural' isomorphism
$\Phi_{X, Y'}: \Hom_{\Fs}(X, g(Y')) \to^{\tilde \ } \Hom_{\Gs}(f(X), Y')$.}, and finally

\item (axiom of truth) $\Es$ retains an object called the \textit{subobject classifier} $\Omega$, which is equipped with an arrow $\xymatrix{1 \ar[r]^{\textrm{true}} & \Omega}$ such that for each monomorphism $\sigma: Y \into X$ in $\Es$, there is a unique \textit{classifying map} $\phi_{\sigma}: X \to \Omega$ making $\sigma: Y \into X$
a pull-back\endnote{In this particular case, the pull-back-condition means that, given any $\theta: Z \to X$ so that $\phi_{\sigma} \circ \theta: Z \to \Omega$ is the (unique) arrow $\xymatrix{Z \ar[r] & 1 \ar[r]^{\textrm{true}} & \Omega}$, then there is a unique map $\gamma: Z \to Y$ so that $\sigma \circ \gamma = \theta$.}  of $\phi_{\sigma}$ along the arrow true. 
\end{enumerate}

\end{dfn}

\begin{rmk}[Grothendieck-topos] \label{grothendiecktopos}
In respect to this categorical definition, a Grothendieck-topos is a topos with the following conditions (J. Giraud)\endnote{For the definition and further implications see 
Johnstone, \textit{Topos Theory}, 1977, \textit{Topos Theory}, pp. 16--17.} satisfies: (1) $\Es$ has all set-indexed coproducts, and they are disjoint and universal, 
(2) equivalence relations in $\Es$ have universal coequalisers.
(3) every equivalence relation in $\Es$ is effective, and every epimorphism in
$\Es$ is a coequaliser,
(4) $\Es$ has 'small hom-sets', i.e. for any two objects $X, Y$, the morphisms
of $\Es$ from $X$ to $Y$ are parametrized by a set, and finally
(5) $\Es$ has a set of generators (not necessarily monic in respect to $1$ as in the case of locales). Together the five conditions can be taken as an alternative definition of a Grothendieck-topos (compare to definition \ref{sieve}), and therefore also to the weak postulate.
\end{rmk}

We should still demonstrate that Badiou's world of $T$-sets is actually the \textit{category} of sheaves $\Sh(T, J)$ and that it will, consequentially, hold up to those conditions of a topos listed above. To shift to the categorical setting, one first needs to define a relation between objects. These relations, the so called 'natural transformations' we encountered in relation Yoneda lemma, should satisfy conditions Badiou regards as 'complex arrangements'. 

\begin{dfn}[Relation] 
A \textit{relation} from the object $(A, \Id_{\alpha})$ to the object $(B, \Id_{\beta})$ is a map $\rho: A \to B$ such that $$\Ee_{\beta}\  \rho(a) = \Ee_{\alpha}\  a  \quad \textrm{ and } \quad \rho(a \res p) = \rho(a) \res p.$$ 
\end{dfn}
\begin{prop}
It is a rather easy consequence of these two presuppositions that it respects the order relation $\leq$ one retains $$\Id_{\alpha}(a, b) \leq \Id_{\beta}(\rho(a), \rho(b))$$ and that if $a \ddagger b$ are two compatible elements, then also $\rho(a) \ddagger \rho(b)$\endnote{Badiou, \textit{Logics of Worlds}, 2009. pp. 338--339.}. 
Thus such a relation itself is compatible with the underlying $T$-structures.\end{prop}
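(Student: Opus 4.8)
The plan is to derive all three assertions from the single key inequality $\Id_\alpha(a,b) \leq \Id_\beta(\rho(a),\rho(b))$, which I would establish first and then read off the compatibility and order-preservation claims as corollaries. Throughout I would lean on facts already available for objects: the localisation identity $\Id(x \res p, y) = \Id(x,y) \wedge p$, the lemma $a \res \Id(a,b) = b \res \Id(a,b)$ proved in the sheaf argument of Section \ref{tsetisasheaf}, and the general bound $\Id(x,y) \leq \Ee x \wedge \Ee y$. The only genuinely new inputs are the two defining properties of a relation, namely existence-preservation $\Ee_\beta \rho(a) = \Ee_\alpha a$ and localisation-compatibility $\rho(a \res p) = \rho(a) \res p$.

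For the key inequality I would write $p = \Id_\alpha(a,b)$. Since $a \res p = b \res p$, applying $\rho$ and invoking $\rho(x \res p) = \rho(x) \res p$ yields $\rho(a) \res p = \rho(b) \res p$. I would then compute $\Id_\beta(\rho(a),\rho(b)) \wedge p$ by localising the first argument, obtaining the chain $\Id_\beta(\rho(a),\rho(b)) \wedge p = \Id_\beta(\rho(a) \res p, \rho(b)) = \Id_\beta(\rho(b) \res p, \rho(b)) = \Ee_\beta \rho(b) \wedge p$. Existence-preservation gives $\Ee_\beta \rho(b) = \Ee_\alpha b$, and since $p = \Id_\alpha(a,b) \leq \Ee_\alpha b$ one has $\Ee_\alpha b \wedge p = p$. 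Hence $\Id_\beta(\rho(a),\rho(b)) \wedge p = p$, i.e. $p \leq \Id_\beta(\rho(a),\rho(b))$, which is exactly the asserted inequality.

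Compatibility-preservation is then almost immediate and does not even require the inequality: starting from the defining equation $a \res \Ee_\alpha b = b \res \Ee_\alpha a$ of $a \ddagger b$, I would apply $\rho$, use $\rho(x \res q) = \rho(x) \res q$ on each side, and substitute $\Ee_\alpha a = \Ee_\beta \rho(a)$ and $\Ee_\alpha b = \Ee_\beta \rho(b)$ via existence-preservation; the resulting identity $\rho(a) \res \Ee_\beta \rho(b) = \rho(b) \res \Ee_\beta \rho(a)$ is precisely $\rho(a) \ddagger \rho(b)$. For order-preservation, if $a \leq b$, i.e. $\Ee_\alpha a = \Id_\alpha(a,b)$, the key inequality gives $\Ee_\beta \rho(a) = \Ee_\alpha a = \Id_\alpha(a,b) \leq \Id_\beta(\rho(a),\rho(b))$, while the general bound forces $\Id_\beta(\rho(a),\rho(b)) \leq \Ee_\beta \rho(a)$; combining the two yields $\Ee_\beta \rho(a) = \Id_\beta(\rho(a),\rho(b))$, that is $\rho(a) \leq \rho(b)$.

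The main obstacle, such as it is, lies entirely in the key inequality, and more specifically in the bookkeeping of which argument of $\Id_\beta$ is being localised: everything turns on reducing $\Id_\beta(\rho(a),\rho(b)) \wedge p$ to $\Ee_\beta \rho(b) \wedge p$ by means of $\rho(a)\res p = \rho(b) \res p$, after which the two hypotheses on $\rho$ close the argument mechanically. The compatibility and order statements are then genuinely the \emph{easy consequences} the author advertises, demanding only the two defining properties of a relation together with the structural identities for $\Id$, $\Ee$ and $\res$ already in hand.
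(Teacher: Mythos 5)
Your proof is correct, but there is nothing in the paper to compare it against: the paper states this proposition as ``a rather easy consequence'' and simply defers to Badiou (\textit{Logics of Worlds}, pp.~338--339) in a footnote, giving no argument of its own. Your derivation therefore supplies a demonstration the paper omits. The key computation checks out: with $p = \Id_{\alpha}(a,b)$, the lemma $a \res p = b \res p$ (which the paper does invoke in the proof sketch of Section \ref{tsetisasheaf}), pushed through $\rho$ via $\rho(x \res p) = \rho(x) \res p$, gives $\rho(a) \res p = \rho(b) \res p$, and then the localisation identity $\Id(x \res p, y) = \Id(x,y) \wedge p$ together with $\Ee_{\beta}\, \rho(b) = \Ee_{\alpha}\, b$ and $p \leq \Ee_{\alpha}\, b$ yields $\Id_{\beta}(\rho(a),\rho(b)) \wedge p = \Ee_{\beta}\,\rho(b) \wedge p = p$, i.e.\ the asserted inequality. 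The compatibility claim is, as you say, even easier, since the hypothesis $a \res \Ee_{\alpha} b = b \res \Ee_{\alpha} a$ is already an equality of elements to which $\rho$ can be applied directly; and your treatment of the order $\leq$ on elements correctly combines the key inequality with the general bound $\Id(x,y) \leq \Ee x \wedge \Ee y$.

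One dependence you should make explicit. In passing from the equality of \emph{atoms} $y \mapsto \Id_{\alpha}(a,y) \wedge p$ and $y \mapsto \Id_{\alpha}(b,y) \wedge p$ to the equality of \emph{elements} $a \res p = b \res p$ --- which is what licenses applying the function $\rho$ --- you are using uniqueness of the real element representing an atom, i.e.\ separatedness of the $T$-set. Badiou's postulate of materialism gives existence, and uniqueness only ``within appearing'' ($\Id(x,x') = \Ee x = \Ee x'$); without strict uniqueness, upgrading that appearing-identity through $\rho$ would require the very inequality you are proving, a circularity. This is a convention the paper itself adopts silently (it treats $x \res p$ as a well-defined element and states the Section \ref{tsetisasheaf} lemma as an element equality), so it is a shared hypothesis rather than a flaw in your argument, but a self-contained write-up should flag it.
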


Given these definitions, regardless of Badiou's\endnote{Badiou, \textit{Logics of Worlds}, 2009. p. 339.} confusion about the structure of the 'power-object', it is safe to assume that Badiou has demonstrated that there is at least a \textit{category} of $T$-$\sets$ if not yet a topos. Its objects are defined as $T$-sets situated in the 'world $\m$'\endnote{\label{setsworld}Despite the fact that the category of $\sets$---which isn't itself a set as Badiou demonstrates in the case of the 'world $\m$'---is 'paradoxic' based on Russell's argument, one may now safely replace the category $\sets$ with any category $\Ss$, which satisfies the basic properties of the set-theoretic world $\sets$: a topos satisfying the categorical version of the axiom of choice (AC) and support generators (SG) and has a natural number object; and finally its subobject-classifier equals $\Two$ or is 'bi-valued'.} together with their respective equalization functions $\Id_{\alpha}$. 
It is obviously Badiou's 'diagrammatic' aim to demonstrate that this category is a topos and, ultimately, to reduce any 'diagrammatic' claim of 'democratic materialism' to the constituted, non-diagrammatic objects such as $T$-sets. That is, by showing that the particular set of objects is a categorical makes him assume that every category should take a similar form: a classical mistake of reasoning referred to as \textit{affirming the consequent}. In the next section, we discuss Badiou's struggles  regarding this affair. This is not only a technical issue but philosophically alarming as well. 

\section{Badiou's Diagrammatic Struggle}

Confusing general topoi with specific locales, Badiou then believes to have overcome the need to work categorically, through 'diagrams', and he again returns to his secure, 'Platonic' basis. Thus he believes to have reverted the course of topos theory by bringing categorical reasoning back down to set theory. By this he inevitably strikes the notion of 'diagrams' utilized not only by mathematicians but by several 'post-structuralists' after the term was first launched by Michel Foucault\endnote{Foucault, Michel (1995), \textit{Discipline and Punish: The Birth of the Prison}, trans. Alan Sheridan. New York: Random House. pp. 171, 205.}. However, he not only makes a converse error but also struggles in the way he treats his own, specific category. No question his erudition in categorical diagrammatics is deficient. 

For example, when it comes to the actual 'mathemes' in this category of $T$-$\sets$, he demonstrates that there exists an object which he calls an 'exponent' of a relation $X \to Y$.  
In other words, '[i]f we consider [\ldots] two Quebecois in turn as objects of the world, we see that they each entertain a relation to each of the objects linked together by the 'Oka incident' [\ldots] and, by the same token, a relation to this link itself, that is a relation to a relation'\endnote{Ibid., 313.}. 
In diagrammatic terms, Badiou\endnote{Ibid., 317} defines a relation to be ''universally exposed' if given two distinct expositions of the same relation, there exists between the two exponents [sic] one and only one relation such that the diagram remains commutative', and for that he draws the following diagram: 
$$\xymatrix{& \textrm{Exponent 1} \ar[ddl] \ar[ddrrr] & & \textrm{Exponent 2} \ar[ddlll] \ar[ll]_{\textrm{unique relation}} \ar[ddr] & \\
&&&&\\
\textrm{Object 1} \ar[rrrr]^{\textrm{Exposed relation}} &&&& \textrm{Object 2}.}$$ 
On the basis of this diagram, Badiou\endnote{Ibid., 315--316.} argues that if the previous diagram with the 'progressive Qubebecois citizen' (Exponent 1) was similarly exposed also with another, 'progressive citizen' (Exponent 2), then 'if the reactionary supports the progressive, who in turn supports the Mohawks, there follows a flagrant contradiction with the direct relation of vituperation that the reactionary entertains with the Mohawks'. The claims is unwarranted: Badiou \textit{falsely argues for the necessity of its non-commutativity}, even if it were possible, thus again confusing the necessary and the sufficient. 

\begin{prop}
Badiou's condition of universal exposition is falsely sta-ted. Given \emph{any relation} $f: X \to Y$, it is easy to construct an object that does not satisfy Badiou's universality condition in almost any topos, in particular in $\sets$.
\end{prop}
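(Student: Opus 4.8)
The plan is first to make precise what Badiou's ``exponent'' and ``universal exposition'' amount to categorically, since his diagram is stated only informally. An exposition of the exposed relation $f: X \to Y$ is an object $E$ equipped with a pair of arrows $E \to X$ and $E \to Y$ whose composites with $f$ agree, i.e.\ a cone over the cospan determined by $f$; these assemble into a category $\mathrm{Exp}(f)$ whose morphisms are the arrows between apices commuting with both structure maps. Read in this light, Badiou's clause ``given two distinct expositions there exists one and only one relation between the exponents such that the diagram remains commutative'' is a \emph{mis}-statement of a universal property: the genuine property of a limit grants a unique mediating arrow only \emph{into} the terminal (universal) exposition from each exposition, never between an arbitrary ordered pair. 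His argument for the \emph{necessity} of non-commutativity then conflates ``the mediating arrow need not exist between two given cones'' with ``the square must fail to commute,'' which does not follow.

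To exhibit the failure concretely I would work in $\sets$ and simply count mediating arrows. Fix any $f: X \to Y$ with $X \neq \emp$. Take the first exposition to be $E_1 = X \times (1+1)$ with its projection to $X$ (and thence to $Y$ via $f$), and the second to be $E_2 = X$ with the identity. A morphism $E_2 \to E_1$ in $\mathrm{Exp}(f)$ is precisely a section of the projection $X \times (1+1) \to X$, that is an arbitrary map $X \to (1+1)$; there are therefore $2^{|X|}$ of them, and never exactly one once $X$ is inhabited. Thus the pair $(E_1, E_2)$ violates Badiou's ``one and only one'' clause, so $f$ is not ``universally exposed'' in his sense --- even though every datum in his diagram is present and the ambient category is the most ``Platonic'' one available. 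Symmetrically, reversing the roles and taking $E_1 = \emp$ yields expositions between which \emph{no} mediating relation exists, so that both the existence and the uniqueness halves of his condition fail generically.

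Finally I would note that the construction survives in almost any topos: replacing the two-element set by the coproduct $1+1$ of two copies of the terminal object, the mediating arrows $E_2 \to E_1$ over $X$ correspond to morphisms $X \to 1+1$, and the two composites $X \to 1 \rightrightarrows 1+1$ through the coproduct injections are already distinct whenever the topos is non-degenerate (so $1+1 \not\cong 1$) and $X$ is well-supported, whence uniqueness fails. The main obstacle is not the mathematics, which is a routine arrow-count, but the \emph{interpretive} step: one must reconstruct Badiou's prose and his single hand-drawn square into a faithful categorical statement --- fixing in particular the direction of the ``unique relation'' and exactly which triangles are required to commute --- so that the counterexample refutes the claim he actually makes rather than a convenient strawman. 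Once that reading is fixed, the proposition follows immediately from the observation that universality is a property of a \emph{single} distinguished exposition, not a relation that any two expositions must bear to one another.
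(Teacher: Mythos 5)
Your proof is correct and follows the same basic strategy as the paper's: refute Badiou's ``one and only one relation'' clause by exhibiting two expositions of the same relation between which the mediating arrow fails to be unique, the failure coming from an unconstrained degree of freedom. The concrete counterexamples differ: the paper takes both exponents to be $X \times X \times X$ (with structure maps that never mention the third factor), so that the third component of any mediating map $h$ may be chosen arbitrarily, whereas you take $E_1 = X \times (1+1)$ and $E_2 = X$ and identify mediating arrows $E_2 \to E_1$ with sections of the projection, i.e.\ with maps $X \to 1+1$. The constructions are interchangeable in $\sets$, but your route buys two things the paper only gestures at: first, the failure of the \emph{existence} half of Badiou's clause (via the empty exposition $E_1 = \emp$), which the paper does not address; and second, an actual proof of the ``almost any topos'' part of the statement --- the paper carries out its argument only in $\sets$ (setting $T = \truth$) and waves at generality with the phrase ``enough room to play,'' while your non-degeneracy and well-supportedness hypotheses make the general claim precise via the two distinct composites $X \to 1 \rightrightarrows 1+1$. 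The one ingredient of the paper's proof you omit is the terminological point that Badiou's ``exponent'' is really the graph $\Gamma_f$ of the relation rather than the exponential object $X^Y$; this is not needed for the counterexample itself, but it is part of how the paper substantiates the phrase ``falsely stated,'' so a complete replacement of the paper's proof would want to retain it.
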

\begin{proof}
To illustrate, let us assume that $T= \truth$ so that we actually work in the category of $\sets$ and can treat its 'elements' point-wise. For example, we can consider the diagram with $\pi_1, \pi_2$ denoting the two projection maps of the Cartesian product $X \times X \to X$, 
$$\xymatrix{X \times X \times X  \ar[dd]^{\pi_1} \ar[ddrr]_{\pi_2 \qquad} \ar@{-->}[rr]^{h ?} & & X \times X \times X \ar[ddll]^{\qquad f \circ \pi_1 } \ar[dd]^{f \circ\pi_2} \\ & & \\
X \ar[rr]^f & & Y.}$$
It is clear that the third component of the map $h$---namely the map $\pi_3 \circ h \circ \iota_3$---can be defined arbitrarily because in any of the maps with the domain $X \times X \times X$ there are no references to the third projection-component. 
The only condition concerns the first component of $h$: it has to satisfy specific conditions with respect to the second component in the image of the map $h$. In other words, in Badiou's definition a relation is never universally exposed as long as there is enough room to play such as in the case of the category of $\sets$. 

There is further terminological confusion. 
This regards to how Badiou assumes the object to 'expose' a \textit{single} relation. As such his 'exponent' obviously contradicts with the standard definition of category theory, considering it as the object $X^Y$ of \textit{all} relations between $X$ and $Y$ instead of exposing only a particular relation, say $f: X \to Y$. 
\end{proof}

In short, Badiou's exponents are \textit{graphs} in conventional terminology. 
The correct form of exposition---should it follow Badiou's reasoning in the case of the Quebecois---would then be that a relation is universally exposed by \textit{a particular object} $\Gamma_f$ so that whenever another object $Z$ also exposes the relation, there is a unique arrow $Z \to \Gamma_f$, which in turn makes the diagram to commute. What is even more severe, however, is that Badiou forgets to specify that discussed object $\Gamma_f$ ($F_{\rho}$ in Badiou's example) \textit{up to an isomporphism}---to specify what makes it structurally \textit{unique}. It is this unique isomorphism class which should satisfy a so called \textit{universal property} to specify it. The universal exposition of the relation $f$ is not just a property of the relation alone but a particular 'universal object' in respect to that relation $f$; in the above example it defines not any possible citizen but a unique form of a 'universal citizen' associated with the particular relation of the 'Oka incident'---call it $\Gamma_{\textrm{Oka incident}}$. 

\begin{rmk}
This inability to define what precisely is 'universal' in the object specified by the universal condition demonstrates that Badiou's understanding of what is specific to category theory is faulty. In other words, he fails to follow that precise 'Kantian' shift that doesn't specify objects directly, as what they \textit{are}, but diagrammatically, in regard to how they \textit{relate}. Such 'universal properties' are typical to modern mathematics in various branches, and they were used even before category theory was first introduced but as its structural precursors instead. Such universal properties then become the key to mathematics when dealt with from the categorical point of view. Badiou by contrast could not but denounce the diagrammatic approach in advance, even if he then falsely concluded to have 'shown' that the 'same mathematics' were still going on. 

For example, the first condition in the general definition of an elementary topos $\Es$ requires there to exist finite limits\endnote{If a category $\mathcal{J}$ is finite (in the number of objects and arrows), we can consider the category $\Es^{\mathcal{J}}$ whose objects are functors $F: \mathcal{J} \to \Es$ and arrows so called 'natural transformations'. There is a diagonal functor $\Delta_{\mathcal{J}}: \Es \to \Es^{\mathcal{J}}$ defined as the constant functor associating $\Delta(X)(j) = X$ for all $j \in \mathcal{J}$. Then 
the limit-condition means that there is a right adjoint to $\Delta_{\mathcal{J}}$ which is $\varprojlim_{\mathcal{J}}: \Es^{\mathcal{J}} \to \Es$ and designates a limit of $\mathcal{J}$-indexed objects. An alternative definition supposes only the existence of pull-backs and a terminal-objects from which the existence of all finite limits follows. Those are limits of the categories of the empty category (terminal object) and the category with only one object and two nonidentity morphisms $\to \bullet \gets$ (pull-back).} but this is equivalent to there existing a so called terminal object and pull-backs: \end{rmk}

\begin{dfn}[Pull-back] For any given two arrows $f: X \to Z$ and $g: Y \to Z$, a pull-back of those is an object $X \times_Z Y$ such that the inner diagram commutes if it satisfies the following universal condition: if the following diagram commutes, every arrow $u: W \to X \times_Z Y$ is unique up to an isomorphism. 
$$
\xymatrix{
W \ar@/_/[ddr] \ar@/^/[drr]
\ar@{.>}[dr]|-{u} \\
& X \times_Z Y \ar[d]^q \ar[r]_p
& X \ar[d]_f \\
& Y \ar[r]^g & Z.} 
$$
\end{dfn}

\begin{prop}[Exposition of a Singler Relation]
The corrected version of Badiou's universal 'exposition' \emph{of a particular relation} now comes back to saying that there exists a pull-back 
\begin{equation} \label{universal_exposure} \xymatrix{
F_{\rho} = \Gamma_{\rho:A \to B} \ar@{-->}[d]^g \ar@{-->}[r]^f &
X \ar[d]_{\rho} \\
B \ar@{=}[r]^{1_B} & B}\end{equation}
in the category of $T$-$\sets$.
\end{prop}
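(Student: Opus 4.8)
The whole force of the statement rests on a single structural fact established in Section~\ref{tsetisasheaf}: the category of $T$-$\sets$ has been identified with the Grothendieck-topos $\Sh(T,J)$, and such a topos possesses all finite limits, in particular a pull-back for every cospan. The plan is therefore to obtain $F_{\rho}$ directly as the pull-back of the cospan $X \xrightarrow{\rho} B \xleftarrow{1_B} B$ drawn in \eqref{universal_exposure}, to recognise this pull-back as the graph $\Gamma_{\rho}$ of the relation, and then to exhibit its defining universal property as the repaired version of Badiou's ``universal exposition''. The point is not that a new object must be manufactured---the topos structure already hands us the limit---but that the correct universal mapping property, the clause Badiou stated only by half, is precisely what the pull-back encodes.

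First I would form $F_{\rho}$ as the pull-back object. Since one leg of the cospan is the identity $1_B$, this pull-back is canonically isomorphic to $X$ along $f$; concretely it is the graph $\Gamma_{\rho}$, the image of $\langle 1_X, \rho\rangle : X \to X \times B$, carrying the two restricted product projections, namely $f$ onto $X$ and $g = \rho \circ f$ onto $B$. I would then check that $f$ and $g$ are genuine relations of $T$-sets, i.e.\ that each preserves existence $\Ee$ and commutes with every localisation $\res p$; by the identities $\Ee_{B}\,\rho(x) = \Ee_{X}\,x$ and $\rho(x \res p) = \rho(x) \res p$ this reduces entirely to the two clauses already built into the definition of $\rho$ as a relation, so no separate computation is needed.

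The decisive step is the universal property. I would verify that for any $T$-set $Z$ equipped with relations $u : Z \to X$ and $v : Z \to B$ making the outer square commute---that is, satisfying $\rho \circ u = 1_B \circ v = v$, which is exactly what it means for $Z$ to \emph{expose} $\rho$ in Badiou's sense---there is one and only one relation $Z \to F_{\rho}$ whose composites with $f$ and $g$ return $u$ and $v$. Existence is the pull-back property itself; uniqueness is forced because $f$, being the pull-back of the isomorphism $1_B$, is again an isomorphism. This is the corrected exposition: the exposing object is pinned down \emph{up to a unique isomorphism} by a universal property, rather than being one arbitrary ``exponent'' among many, and this is exactly the specification Badiou omitted when he demanded only commutativity of his diagram.

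The one place that asks for care---and where I expect the only, rather modest, obstacle to sit---is the confirmation that the pull-back formed inside $\Sh(T,J)$ lands back among the \emph{proper} $T$-sets, those all of whose atoms are real, so that the postulate of materialism is not silently violated. Because a topos is closed under finite limits this is automatic once the category of $T$-$\sets$ is identified with $\Sh(T,J)$ as in Section~\ref{tsetisasheaf}; and should one wish to stay within Badiou's hand-built category without appealing to that identification, one may instead verify the atomicity condition for $F_{\rho}$ by hand, noting that, since pulling back along an identity is harmless, completeness and the reality of atoms are inherited directly from $X$.
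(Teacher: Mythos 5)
Your proof is correct, but it takes a genuinely different route from the paper's. The paper proves this proposition by completing Badiou's own hand-built construction: it first records that the product $A \times B$ of $T$-sets exists, then forms $F_{\rho}$ concretely as the multiple of pairs $(x, \rho(x))$ equipped with the transcendental indexing $\nu((a, \rho(a)), (b, \rho(b))) = \Id_{\alpha}(a,b) \wedge \Id_{\beta}(\rho(a), \rho(b))$, proves that every atom of $F_{\rho}$ is real by transporting an atom $\epsilon$ on $F_{\rho}$ to the atom $\epsilon^{*}: a \mapsto \epsilon(a, \rho(a))$ on $A$ and invoking the postulate of materialism there, and finally checks that the projections conserve existences and localisations ($f(a \res p) = f(a) \res p$, etc.), so that the pull-back property can be read off set-theoretically; it is explicitly a partial proof whose point is to isolate the clause Badiou omitted (uniqueness up to unique isomorphism). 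You instead invoke the equivalence of $T$-$\sets$ with $\Sh(T,J)$, take the pull-back that the topos structure hands you, and observe that pulling back along $1_B$ yields an isomorphism onto $X$, hence the graph. Both arguments are sound, and your closing by-hand check that reality of atoms is inherited from $X$ along that isomorphism is in substance exactly the paper's Lemma-4 step. What differs is what each presupposes and what each exhibits: the paper's concrete route stays inside Badiou's formalism --- which is the point of that section, namely to show how his own lemmas, suitably completed, repair his misstatement --- and does not lean on the sheaf equivalence, which in the paper's narrative is only deployed in the later corrected topos theorem; your abstract route is shorter and strictly more general, since it delivers arbitrary pull-backs rather than only those with an identity leg (something the paper merely remarks could be done with a few more complications), but it front-loads the equivalence with $\Sh(T,J)$ (cited from Wyler and only established later in the paper), so relative to the paper's expository order it functions as a forward reference rather than an independent verification within the category of $T$-$\sets$ itself.
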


\begin{rmk}
With only a few more complications Badiou' proof of above proposition could easily be achieved by showing that \textit{any} pull-back does exist. This would, in fact, complete the proof of the first condition of an elementary topos (definition \ref{toposdefinition}). This may follow steps similar to the demonstration of the existence of a graph: something Badiou does manage to accomplish. 
\end{rmk}

\begin{proof}[Partial proof] Badiou phrases the universal condition mistakenly and doesn't account to its \textit{uniqueness}. He demonstrates only the \textit{existence} of such a graph whereas uniqueness would need a corrected definition of a universal property. The proof of existence follows by first demonstrating that normal Cartesian product exists: '[g]iven two multiples $A$ and $B$ appearing in a world, the product of these two sets, that is the set constituted by all the ordered pairs of elements of $A$ and $B$ (in this order), must also appear in this world'\endnote{Badiou, \textit{Logics of Worlds}, 2009. p. 345.}. The next step consists of showing for a relation $\rho: (A, \Id_{\alpha}) \to (B, \Id_{\beta})$ that the multiple consisting of pairs $(x, \rho(x)) \subset A \times B$, denoted by $F_{\rho}$ is a multiple itself\endnote{Ibid., lemma 2, 346.}, and if equipped with a map $\nu: F_{\rho} \to T$, where $$\nu((a, \rho(a)), (b, \rho(b))) = \Id_{\alpha}(a, b) \wedge (\rho(a), \rho(a')),$$ this map satisfies the conditions of a transcendental indexing\endnote{Ibid., lemma 3, 346.} $$\nu(x, y) \wedge \nu(y, z) \leq \nu(x, z).$$ To show that $F_{\rho}$ is an object, one is thus required to show that '[e]very atom is real'\endnote{Ibid., lemma 4, 347.}. Given an atom $\epsilon: F_{\rho} \to T$ Badiou\endnote{Ibid., 347--348.} constructs a map $\epsilon^*: a \mapsto \epsilon(a, \rho(a)): A \to T$, which is an atom since $\epsilon$ satisfies the corresponding conditions. Hence by the 'postulate of materialism' it is real; say $\epsilon^*(x) = \Id_{\alpha}(c, x)$. But now $\epsilon(x, \rho(x)) = \nu[(c, \rho(c)), (x, \rho(x))]$ which proves Badiou's Lemma 4. By his Lemma 5, Badiou then demonstrates that the diagram (\ref{universal_exposure}) is valid: '[t]he object $(F_{\rho}, \nu)$ is an exponent of the relation $\rho$', which follows by showing that '$f$ and $g$ conserve localizations', ie. $f(a \res p) = f(a) \res p$ and $g(a \res p) = g(a) \res p$; and that $(a, \rho(a)) \res p = a \res p$. Because of the definition of $\F_p$ it is easy to see (using set theory) that it satisfies the universality condition of the pull-back diagram (\ref{universal_exposure}, p. \pageref{universal_exposure}) and thus Badiou\endnote{Ibid., 350--352.} is left to show only that it similarly 'conserves existences and localizations'. 

\end{proof}

\section{$T$-sets Form a Topos---A Corrected Proof}

We need to close up the discussion regarding the 'logical completeness of the world'\endnote{Ibid., 318.} by showing that his world of $T$-sets does indeed give rise to a topos. 
\begin{thm}
Badiou's world consisting of $T$-$\sets$---in other words pairs $(A, \Id)$ where $\Id: A \times A \to T$ satisfies the particular conditions in respect to the complete Heyting algebra structure of $T$---is 'logically closed', that is, it is an \emph{elementary topos}. It thus encloses not only pull-backs but also the exponential functor. These make it possible for it to internalize a Badiou's infinity arguments that operate on the power-functor and which can then be expressed from insde the situation despite its existential status. 
\end{thm}

\begin{proof} We need to demonstrate that Badiou's world is a topos. Rather than beginning from Badiou's formalism of $T$-sets, we refer to the standard mathematical literature based on which $T$-sets can be regarded as sheaves over the particular Grothendieck-topology on the category $T$: there is a categorical equivalence between $T$-sets satisfying the 'postulate of materialism' and $\Sh(T, J)$. The above complications Badiou was caught up with while seeking to 'Platonize' the existence of a topos thus largely go in vain. We only need to show that $\Sh(T, J)$ is a topos. 

Consider the adjoint sheaf functor that always exists for the category of presheaves $$\Id_{\alpha}: \sets^{\ce^{op}} \to \Sh(\ce^{op}, J),$$ where $J$ is the canonical topology. It then amounts to an equivalence of categories. Thus it suffices to replace this category by the one consisting of presheaves $\sets^{T^{op}}$. This argument works for any category $\ce$ rather than the specific category related to an external complete Heyting algebra $T$. In the category of $\sets$ define $Y^X$ as the set of functions $X \to Y$. Then in the category of presheaves $\sets^{\ce^{op}}$ $$Y^X(U) \cong \Hom(h_U, Y^X) \cong \Hom(h_U \times X, Y),$$ where $h_U$ is the representable sheaf $h_U(V) = \Hom(V, U)$. The adjunction on the right side needs to be shown to exist for all sheaves---not just the representable ones. The proof then follows by an argument based on categorically defined limits whose existence is almost a trivial task\endnote{Johnstone, \textit{Topos Theory}, 1977, pp. 24--25.}. It can also be verified directly that the presheaf $Y^X$ is actually a sheaf. 

Finally, for the existence of the subobject-classifier $\Omega_{\sets^{\ce^{op}}}$\endnote{Ibid., p. 25.}, 
it can be defined as $$\Omega_{\sets^{\ce^{op}}}(U) \cong \Hom(h_U, \Omega) \cong \{\textrm{sub-presheaves of } h_U\} \cong \{\textrm{sieves on U}\},$$
or alternatively, for the category of proper sheaves $\Sh(\ce, J)$, as 
$$\Omega_{\Sh(\ce, J)}(U) = \{\textrm{closed sieves on U}\}.$$
 Here it is worth reminding ourselves that the topology on $T$ is defined by a basis $K(p) = \{\Theta \subset T \mid \Sigma \Theta = p\}$. Therefore, in the case of $T$-sets satisfying the strong 'postulate of materialism', $\Omega(p)$ consists of all sieves $S$ (downward dense subsets) of $T$ bounded by relation $\Sigma S \leq p$. These sieves are further required to be \textit{closed}. A sieve $S$ with an envelope $\Sigma S = s$ is \textit{closed} if for any other $r \leq s$, ie. for all $r \leq s$, one has the implication $$f_{rs}^*(S) \in J(r) \quad \Longrightarrow \quad f_{rs} \in S,$$ where $f_{rs}: r \to s$ is the unique arrow in the poset category. In particular, since $\Sigma S = s$ for the topology whose basis consists of territories on $s$, we have the equation $1_s^*(S) = f_{ss}^*(S) = S \in J(s)$. Now the condition that the sieve is closed implies $1_s \in S$. This is only possible when $S$ is the maximal sieve on $s$---namely it consists of \textit{all} arrows $r \to s$ for $r \leq s$. In such a case it is easily verified that $S$ itself is closed. Therefore, in this particular case 
 $$\Omega(p) = \{\downarrow(s) \mid s \leq p\} = \{h_s \mid s \leq p\}.$$ It is no more difficult to verify that this is indeed a sheaf whose all amalgamations are 'real' in the sense of Badiou's postulate of materialism. Thus it retains a suitable $T$-structure. 

Let us assume now that we are given an object $A$, which is basically a functor and thus a $T$-graded family of subsets $A(p)$. For there to exist a sub-functor $B \into A$ comes down to stating that $B(p) \subset A(p)$ for each $p \in T$. For each $q \leq p$, we also have an injection $B(q) \into B(p)$ compatible (through the subset-representation with respect to $A$) with the injections $A(q) \into B(q)$. For any given $x \in A(p)$, we can now consider the set 
$$\phi_p(x) = \{q \mid q \leq p \textrm{ and } x \res q \in B(q)\}.$$ 
This is a sieve on $p$ because of the compatibility condition for injections, and it is furthermore closed since the map $x \mapsto \Sigma \phi_p(x)$ is in fact an atom (exercise) and thus retains a real representative $b \in B$. Then it turns out that $\phi_p(x) = \downarrow(\Ee b)$. We now possess a transformation of functors $\phi: A \to \Omega$ which is natural (diagrammatically compatible). But in such a case we know that 
$B \into A$ is in turn the pull-back along $\phi$ of the arrow true: 
$$\xymatrix{B \ar[r] \ar@{^(->}[d] & 1 \ar[d]^{\textrm{true}} \\
A \ar[r]^{\phi} & \Omega.}$$
It is an easy exercise to show that this map satisfies the universal condition of a pull-back. Indeed, let $\theta: A \to \Omega$ be another natural transformation making the diagram commute. Given $x \in A(p)$ and $q \to p$, the pull-back-condition means that $x \res q \in B(q)$ if and only if $\theta(x \res q) = \textrm{true}_q$, but by naturality of $\theta$ this is same as saying $\theta_p(x) \res q = \textrm{true}_q$. That in turn means $(q \to p) \in \theta_q(x)$. Therefore, as sets, $\phi_q(x) = \theta_q(x)$. This concludes our sketch that $\textrm{true}: 1 \into \Omega$ associating to each singleton of $1(p)$ the maximal sieve $\downarrow (p)$ makes $\Omega$ the subobject classifier of the category $\sets^{T^{op}}$. This is equivalent to the category of $T$-$\sets$. 
\end{proof}

\section{Conclusion---Badiou Inside Out}

In the fourth and the final book of the 'Greater Logic'---the first part of the $\LW$---Badiou attempts to conduct the proof demonstrated above. This proof constitutes the 'analytic' of the inquiry 'bearing only on the transcendental laws of being there', or in other words 'the theory of worlds, the elucidation of most abstract laws of that which constitutes a world qua general form [sic] of appearing'. However, Badiou fails to make the required verifications in order to support the claim for 'generality' that, he believes, denounces the relevance of  categorical being(-there)\endnote{Badiou, \textit{Logics of Worlds}, 2009. p. 299.}. 
Instead, Badiou affirms the consequent immersed in his own place to begin with, that is, he adopts a \textit{locally (and logically) bounded}, reductive approach\endnote{I showed how he reduces topos theory to a theory of so called locales---elementary topoi that are defined over the category of $\sets$ in such a way that the geometric morphism $\gamma: \Es \to \sets$ is \textit{bounded} and \textit{logical}. This makes Badiou to falsely identify the 'internal' logic of a topos with the logic which emerges on the set-theoretic surface of $T:= \gamma_*(\Omega)$ because for a general $\sets$-topos (eg. Grothendieck-topos), these two logics do not agree.} to topos theory. 

If Badiou's dialectics then aims to bridge the 'analytics' (\textit{Logics of Worlds}) with 'dialectics' ($\BE$), the phrase 'analytic' refers exactly to the strong, initially \textit{bounding} condition of the constitutive, atomic postulate of materialism. Similar boundaries do not apply to 'weak materialism' and, therefore, the materially weak Grothendieck-topoi could be taken as metaphorical characterizations of Badiou's own, 'material dialectic' philosophy. Grothendieck, early on, approached the mathematical \textit{a priori} from a more synthetic, read Kantian perspective (as a variety of sieves and fibres) whereas Badiou assumes such hierarchies to be initial and dominant, all-encompassing gradings of transcendence. 

The postulate of materialism then proves out to be a split postulate, and thus its meaning is very different in non-split situations. Only in the \textit{quasi-split}\endnote{The split understanding of truth refers to the condition of the category of $\sets$ in which the subobject-classifier $\Omega$ is the split set consisting of only two elements: true and false. Similarly, although the truth in a local doesn't split in this explicit manner, it is quasi-split inasmuch as the $T$ anyway set-theoretically represents truth as extensive 'values' or 'degrees'. As a remark, Badiou misleadingly regards them as 'degrees of intensity' inasmuch as the categorical designation of a topos expresses such 'degrees' in a much more intensive meaning. In contrast, Badiou's set-theoretic representation makes these 'degrees of truth' \textit{extensive} when they are set-theoretically incorporated.} situation of a locale do the two conditions of materiality coalesce. 
This in turn seems to contradict with Badiou's other maxim: if dialectics understands 'truths as exceptions', these exceptions need to be understood in terms of the 'most abstract laws'. This is because the 'objective domain of their emergence [\ldots] cannot yet attain the comprehension of the terms that singularize materialist dialectic [\ldots] and subjects as the active forms of these exceptions'\endnote{Ibid. p. 299.}. 

Badiou's struggle with 'diagrammatics' thus makes his own understanding of these 'abstract laws' flawed precisely because they can, as opposed to what Badiou claims, be thought \textit{mathematically}. Indeed, Badiou's own understanding of the materialist truth becomes  \textit{mathematically exceptional} and \textit{split}: it is based either on the primacy of the split concept of truth (the axiom of choice in the $\BE$) or the \textit{quasi-split} materialization of force as demonstrated by his fundamental \textit{law of the subject}\endnote{Badiou, $\BE$, 2006. p. 401.}. It is Badiou who forces us to choose: 'mathematics or poem' and thus assumes the question of mathematics to split world's domains of appearance into two. 

At the same time, Badiou fails to attest his own dialectic maxim, for there is no reason to exempt mathematics from his rule of exceptions: that '[t]here are bodies and languages, \textit{except} that there are truths'\endnote{Badiou, 2009, 6.}. This applies to the world of mathematics. In particular, its embodiment in set theory is exceptional not only because it bounds its truth logically---as a split, two-valued choice characterizing a global hierarchy---but because there is only, and essentially, one instance of such bodies. There are no 'bodies and languages' but, as the name of Badiou's second endeavour suggests, the 'worlds' are multiple only insofar as their 'logics' or languages differ. The materiality of mathematics in this sense appears to be 'democratic' only from within Badiou's inside-out reading. 

To Badiou, there is then but a single, 'affirmative proposition'\endnote{Ibid., 249.} that declares the 'existence of a pure form of the multiple'. There is no interaction in between; the event may never occur as things happen---in the midst of mathematics as it emerges and takes place in the middle of worlds. Badiou's theory is exceptional precisely because it deals with \textit{an} event whose 'existence $=$ individual $=$ body'\endnote{Ibid, 2.}: something to occur only in singular; something whose \textit{communication}, whatever its essence, is bounded by formal language similarly to a situation \textit{internal} to an elementary topos but not from the outset. 

Were the above-mentioned mistakes then accidental, based on an unfortunate, restricted literature\endnote{
In the mathematical references Badiou makes, everything seems to relate only to such logically bounded, localic structures. Badiou (ibid. p. 538) mentions such books as: 

Bell, \textit{Toposes and Local Set Theories: An Introduction}, 1988.

Borceux, \textit{Handbook of Categorical Algebra. Basic Theory. Vol. I.}, 1994. 

Goldblatt, \textit{The Categorical Analysis of Logic}, 1984. 

Wyler, \textit{Lecture Notes on Topoi and Quasi-Topoi}, 1991. 
}? Was it by chance that Badiou encounters phenomenology only in a way constrained by 'intuitionist logic' internal to Heyting algebras? At least, Badiou himself hardly counts as an 'intuitionist', for in the \BE\ he announces that 'that intuitionism has mistaken the route
in trying to apply back onto ontology criteria of connection which \textit{come
from elsewhere}, and especially from a doctrine of mentally effective operations'. He even declares that 'intuitionism is a prisoner of the empiricist and illusory
representation of mathematical objects'\endnote{Ibid., 249.}. But the reduction to local topoi, bounded by their dialectic languages or 'logics', could serve another, more philosophical purpose.

Indeed, Badiou\endnote{Badiou, \textit{Logics of Worlds}, 2009. p. 300.} consequentially  claims to be able to contribute to several philosophical debates. The first problem he discusses has been haunting philosophy since Democritus and Lucretius, who 'had glimpsed the possibility of an infinite plurality of worlds'. Especially this concerns the capacity of set theory to reflect upon infinity. As Badiou\endnote{Ibid. p. 300} continues, to Aristotle the question of the 'arrangement of the world [which] is essentially finite'---a problem to persist 'at the heart of the Kantian dialectic'. 

But the infinity-arguments can be studied from inside any elementary topos, not just $\sets$. There is thus an infinite number of places to make an argument about infinity. The two meanings of infinity---Badiou refers to them as 'bodies' and 'languages'---hardly communicate when it comes to Badiou's own argument. If Badiou was on the right track then while seeking to demonstrate that the logics of worlds are, so to say, infinite. Where he fails is in assuming only logics, not bodies, to be \textit{mathematically infinite}. Any mathematical bodies he discussed are constrained \textit{in} the finite, while makes a case that bodies should only exist as something non-mathematical: that mathematics itself would not count as a body. Therefore, after all, Badiou calls for a return to Aristotle. 

But there is also a second concern this train of thought illustrates---one related to Kant indeed. Badiou\endnote{Ibid. p. 301} argues that an 'object is nothing but the legislation of appearing'. Instead of propagating a relationalist view---as we have done throughout our categorical discussion---Badiou rather claims that '[t]he definition of a relation must be strictly dependent on that of objects'. This is a claim that, without any reference to an empiricist conclusion (Badiou's experience of mathematics), supports the assumption that the purpose of Badiou's mistake is something more than a mere accident.  

'Wittgenstein', he indeed continues, 'having defined the 'state of affairs' as a 'combination of objects', posits that, 'if a thing can occur in a state of affairs, the possibility of the state of affairs must be written into the thing itself''.
Badiou still treats the body or the topos of mathematics as such a 'thing': as an object  that the 'logics of worlds' are written \textit{into} instead of seeing the things or objects themselves as they occur and occupy mathematics, that is, in relation \textit{for} themselves. Mathematics is still viewed as a stable world---as one 'fixed in its mirage' instead of allowing mathematics itself to happen and occupy its own 'states' in between. Its domain is still incorporeal. Language is yet to discover the process of its own \textit{inscription}. 

What is mathematics beyond the 'interior of the world'\endnote{Ibid. p. 262.} then? How does it inscribe its own exteriority from inside of its own topos---to gain a grasp of its own alternatives? Badiou could argue that, even if we have shown that there are not only many languages but bodies (topoi) to mathematics, the situations insternal to them would still be mute and bounded by formal languages like Wittgenstein anticipated. They would still behave  'logically' if only \textit{from within}. This is to say that there still exists a language, or an amalgam of such languages, that a topos then localizes or embodies without violating any of Badiou's 'democratic materialist' claims about mathematics. Even if as such different from set theory, an elementary topos is thus a world where mathematics happens and 'communicates' itself largely in terms of set theory. The 'same mathematics' still continues, but only from the point of view of a single topos of appearance: not between them.

Instead, when we look at those topoi as they themselves geometrically interact, something occurs that suggests that they are not materially logical or bounded, even if for a specific topos (without a categorical insight) we would be unable to establish this 'immateriality' from within. Badiou's mistake thus comes down to precisely this singular perspective-mistake: he only cares what happens from inside a single situation. Again, by an argument based on internal conditions of such a topos, he would then affirm the consequent by suggesting that no other way for situations to exist were possible: that existence as such would necessarily be understandable only from the perspective of subjective inwardness---the philosophical 'leitmotiv' running from Descartes through Hume to Hegel\endnote{Badiou, \LW, 2009. p. 411.}. He would then overlook how two topoi relate to each other (through the so called geometric morphisms). 

But this is not the only way to contest formal language that still appears to regulate elementary topoi if only from within. Instead, what if we could embody structures that did not inscribe a language into its 'thing'---not even internally? Could there be such bodies for which the possibility of whose state of affairs were not 'written into the thing itself', but that could internalize the process of their own interactions? The answer is positive: the world does authorize such forms. Unfortunately, we cannot demonstrate this within the limits of this paper. 

We are safe to conclude, however, that the way in which elementary theory displaces the question of materiality does matter. Topos theory counts as an 'expansive construction of the multiple to the multiple-beings'\endnote{Ibid. p. 309.} and, like local theory, elementary  theory is not exempt from Badiou's material dialectic rule: the category of all $\Top$ is not itself a topos, and thus the \textit{theory} of such topoi is an expansive construction. It is this theory that is not written into the thing itself insofar as its things are topoi. Something truthful resists the existential seal of any particular topos similarly as Russell encountered the fact that the class of all sets is not itself a set. Unlike Badiou's local theory, which still has only one, single body $\Loc$, elementary theory does incorporate an infinity of not only languages but bodies. Badiou's 'logics of worlds', therefore, is only an exception amongst those 'who exist in the world'\endnote{See ibid. p. 310.}.

In conclusion, if Badiou defines the event negatively in respect to mathematical regulation, he is unable to grasp how that regulation itself happens, for otherwise he would need to situate his own event along with others'. The categorical process, by contrast, appears to falsify at least one of Badiou's two arguments: (1) that the event cannot be \textit{formally} approached or localized except by regulating its \textit{consequences}; or (2) that \textit{formal logic is the only meta-structure}, the only foundation to regulate and intervene truth. Any one of these two propositions can survive only given the failure of the other. 

Indeed, let us assume the second postulate and treat category theory as something non-mathematical. This comes down to saying that mathematically it does not exist: it fails to fulfill Badiou's test of 'the true in the Alternative'\endnote{Badiou, \LW, 2009, 430.}---his \textit{second} test of a body. This would make it 'poetic', something external to the 'Platonic' plane of causes (or 'plane of consistence' as Deleuze and Guattari discuss\endnote{Deleuze, Gilles \& Félix Guattari (1988), \textit{Thousand Plateaus---Capitalism and Schizophrenia}. Transl. Brian Massumi, Minneapolis, University of Minnesota Press. [Originally published in 1980.]
}) and, at the same time, category theory itself would appear as an event. It would be approached formally but not in the sense of regulating its consequences, which belong to the domain of logic and causation. Rather, the formal or 'pure' themselves would undergo change, that is, they would happen. 

This suffices to demonstrate that the two statements cannot survive in tandem. However, to understand this from the point of view of the first statement, let us note that topos theory \textit{is} a formal approach to the event---to the 'inconsistent' real being, as Badiou regards it. Elementary theory regulates embodies the event not only in the sense of the locally reconcilable 'generic' instance ($\female$) as is the case in Cohen's argument. In the precise,  \textit{elementary} sense, a topos is an amalgam of mutually inconsistent contexts. Therefore, because topos theory is a formal approach, the first statement suggests that it does \textit{regulate its consequences}: in other words, topos theory is \textit{consequential} to the domain of formal regulation. This appears to contradict with Badiou's second thesis, which argues topos theory, and category theory more broadly, to be inconsequential from the point of view of set theory---the 'plane of consistence'. Otherwise they would make the unmathematical formally manifest, again contradicting with the first thesis. 

\section{On Mathematical Studies of Science} 
There is no doubt about the originality of Badiou's attempt to bridge the gap between the philosophical and scientific discourses of materialism. Not only the philosopher Badiou but Badiou as a student of mathematics opened up a whole new box of Pandora: a new way to approach science studies. He has a new style---a style 'in a great writer' which, like Gilles Deleuze's\endnote{Deleuze, Gilles (1990), \textit{Negotiations, 1972--1990.} Trans. Martin Joughin. New York: Columbia University Press.
p. 100.} says, 'is always a style of life'. 

He said he did not care epistemology, but it is this style that makes him know. 
But much remains to be said and done. As topos theory embodies change as it appears from inside of mathematics, we need entirely new ways to think about 'being' as subject to the event, the world, and to address its own history and becoming. We need to bring that style to mathematics: this invention of 'a possibility of life, a way of existing'. 

That history of this becoming of being runs not only through science: it is visible in philosophy, as it runs through Descartes, Leibniz, Spinoza, Hegel and Heidegger. Yet in  many ways we live in an age when progress is unclear. Few people today engage reason deep down their hearts.

\theendnotes

\section*{References}

\addtolength{\hoffset}{1cm} 
\setlength{\parindent}{-1cm}

\small{

Awodey, S. (1996), 'Structure in Mathematics and Logic: A Categorical Perspective'. \textit{Philosophia Mathematica} 4 (3). pp. 209--237. \\ doi: 10.1093/philmat/4.3.209.




Badiou, Alain (2006), \textit{Being and Event.} Transl. O. Feltman. London, New York: Continuum. [Originally published in 1988.]


Badiou, Alain (2009). \textit{Logics of Worlds. Being and Event, 2.} Transl. Alberto Toscano. London and New York: Continuum. [Originally published in 2006.]

Badiou, Alain (2012), \textit{The Adventure of French Philosophy}. Trans. Bruno Bosteels. New York: Verso.







Bell, J. L. (1988), \textit{Toposes and Local Set Theories: An Introduction.} Oxford: Oxford University Press. 










Borceux, Francis (1994), \textit{Handbook of Categorical Algebra. Basic Theory. Vol. I.} Cambridge: Cambridge University Press. 













Cohen, Paul J. (1963), 'The Independence of the Continuum Hypothesis', \textit{Proc. Natl. Acad. Sci. USA} 50(6). pp. 1143--1148. 

Cohen, Paul J. (1964), 'The Independence of the Continuum Hypothesis II', \textit{Proc. Natl. Acad. Sci. USA} 51(1). pp. 105--110.






Deleuze, Gilles (1990), \textit{Negotiations, 1972--1990.} Trans. Martin Joughin. New York: Columbia University Press.











Deleuze, Gilles \& Félix Guattari (1988), \textit{Thousand Plateaus---Capitalism and Schizophrenia}. Transl. Brian Massumi, Minneapolis, University of Minnesota Press. [Originally published in 1980.]

Foucault, Michel (1995), \textit{Discipline and Punish: The Birth of the Prison}, trans. Alan Sheridan. New York: Random House.

%
%
%
%
%
%
%

Fraser, Mariam, Kember, Sarah \& Lury, Celia (2005), 'Inventive Life. Approaches to the New Vitalism'. \textit{Theory, Culture \& Society} 22(1): 1--14.

%
%
%
%
%
%
%
%

Goldblatt, Robert (1984), \textit{The Categorical Analysis of Logic}. Mineola: Dover. 

Johnstone, Peter T. (1977), \textit{Topos Theory}. London: Academic Press. 

Johnstone, Peter T. (2002). \textit{Sketches of an Elephant. A Topos Theory Compendium.} Volume 1. Oxford: Clarendon Press. 

%

Kant, Immanuel (1855), \textit{Critique of Pure Reason}. Trans. J. M. D. Meiklejohn. London: Henry G. Bohn. 

%
%
%
%
%
%
%
%
%
%
%
%

Krömer, Ralf (2007), \textit{Tool and Object: A History and Philosophy of Category Theory}. Science Networks. Historical Studies 32. Berlin: Birkhäuser.

%
%
%
%
%
%

Landry, Elaine \& Jean-Pierre Marquis (2005), 'Categories in Context: Historical, Foundational, and Philosophical'. \textit{Philosophia Mathematica} 13.

Mac Lane, Saunders \& Ieke Moerdijk (1992), \textit{Sheaves in Geometry and Logic. A First Introduction to Topos Theory}. New York: Springer-Verlag.

Madarasz, Norman (2005), 'On Alain Badiou's Treatment of Category Theory in View of a Transitory Ontology', In Gabriel Riera (ed), \textit{Alain Badiou---Philosophy and its Conditions.} New York: University of New York Press. 23--44.

Palmgren, E. (2009), 'Category theory and structuralism'. \\ url: www2.math.uu.se/\~\ palmgren/CTS-fulltext.pdf, accessed Jan 1$\st$, 2013.

Shapiro, S. (1996), 'Mathematical structuralism'. \textit{Philosophia Mathematica} 4(2), 81--82.

Shapiro, S. (2005) 'Categories, structures, and the Frege-Hilbert controversy: The status of meta-mathematics'. \textit{Philosophia Mathematica} 13(1), 61--62.

Wyler, Oswald (1991), \textit{Lecture Notes on Topoi and Quasi-Topoi}. Singapore, New Jersey, London, Hong Kong: World Scientific. 
%
%
%
%


%
%
%


}

\end{document}